\newtheorem{thm}{Theorem}[section]
\newtheorem{cor}[thm]{Corollary}
\newtheorem{lem}[thm]{Lemma}
\newtheorem{prop}[thm]{Proposition}
\theoremstyle{definition}
\theoremstyle{remark}
\numberwithin{equation}{section}
\newcommand{\C}{\mathbb{C}}
\begin{document}
\title[$k$-Transitivity abelian  semigroup of affine maps ]{ Hypercyclicity and $k$-Transitivity ($k\geq 2$) \\ for abelian  semigroup of
affine maps on $\mathbb{C}^{n}$  }

\author{Yahya N'dao}

 \address{Yahya N'dao, University of Moncton, Department of mathematics and statistics, Canada}
 \email{yahiandao@yahoo.fr ; yahiandao@voila.fr}

\subjclass[2000]{37C85,37B20, 54H20,58F08,28D05, 58E40, 74H65}

\keywords{hypercyclic, $k$-transitivity, semigroup action,
abelian, affine maps, dynamic, orbit\dots}

\begin{abstract}
In this paper, we prove that the minimal number of affine maps on
$\mathbb{C}^{n}$, required to form a hypercyclic abelian semigroup
on $\mathbb{C}^{n}$ is $n+1$. We also prove that the action of any
abelian group finitely generated by affine maps on
$\mathbb{C}^{n}$, is never k-transitive for $k\geq 2$.
\end{abstract}
\maketitle

\section{\bf Introduction }

Let $M_{n}(\mathbb{C})$ be the set of all square matrices of order
$n\geq 1$  with entries in  $\mathbb{C}$ and $GL(n, \ \mathbb{C})$
be the group of all invertible matrices of $M_{n}(\mathbb{C})$. A
map $f: \ \mathbb{C}^{n}\longrightarrow \mathbb{C}^{n}$  is called
an affine map if there exist $A\in M_{n}(\mathbb{C})$  and  $a\in
\mathbb{C}^{n}$  such that  $f(x)= Ax+a$,  $x\in \mathbb{C}^{n}$.
We denote  $f= (A,a)$, we call  $A$ the \textit{linear part} of
$f$. The map $f$ is invertible if $A\in GL(n, \mathbb{C})$. Denote
by $MA(n, \ \mathbb{C})$  the vector space of all affine maps on
$\mathbb{C}^{n}$ and $GA(n, \ \mathbb{C})$ the group of all
invertible affine maps of $MA(n, \mathbb{C})$. Let  $\mathcal{G}$
be an abelian affine sub-semigroup of $MA(n, \ \mathbb{C})$. For a
vector  $x\in \mathbb{C}^{n}$,  we consider the orbit of
$\mathcal{G}$  through  $x$: $O_{\mathcal{G}}(x) = \{f(x): \ f\in
\mathcal{G}\} \subset \mathbb{\mathbb{C}}^{n}$. Denote by
$\overline{E}$ (resp. $\overset{\circ}{E}$) the closure (resp.
interior) of $E$.  The orbit $O_{\mathcal{G}}(x)\subset
\mathbb{C}^{n}$ is dense (resp. locally dense) in $\mathbb{C}^{n}$
if $\overline{O_{\mathcal{G}}(x)} = \mathbb{C}^{n}$ (resp.
$\overset{\circ}{ \overline{O_{\mathcal{G}}(x)}} \neq \emptyset$).
The semigroup $G$ is called \emph{hypercyclic} (or also
\emph{topologically transitive}) (resp. \emph{locally
hypercyclic}) if there exists a point $x\in\mathbb{C}^{n}$ such
that $O_{\mathcal{G}}(x)$ is dense (resp. locally dense) in
$\mathbb{C}^{n}$. For an account of results and bibliography on
hypercyclicity, we refer to the book \cite{bm} by Bayart and
Matheron.

\medskip

On the other part, let $k\geq 1$ be an integer, denote by
$(\mathbb{C}^{n})^{k}$ the $k$-fold Cartesian product of
$\mathbb{C}^{n}$. For every $u = (x_{1}, \dots, x_{k})\in
(\mathbb{C}^{n})^{k}$, the orbit of $u$ under the action of
$\mathcal{G}$ on $(\mathbb{C}^{n})^{k}$ is denoted
$$O^{k}_{\mathcal{G}}(u) = \{(f(x_{1}),\dots ,f(x_{k})) : f \in
\mathcal{G}\}.$$ When $k=1$, $O^{k}_{\mathcal{G}}(u)=
O_{\mathcal{G}}(u)$. We say that the action of $\mathcal{G}$ on
$\mathbb{C}^{n}$ is $k$-transitive if, the induced action of
$\mathcal{G}$ on $(\mathbb{C}^{n})^{k}$ is hypercyclic, this is
equivalent to that for some $u\in (\mathbb{C}^{n})^{k}$,
$\overline{O^{k}_{\mathcal{G}}(u)}=(\mathbb{C}^{n})^{k}$. A
$2$-transitive action is also called weak topological mixing and
$1$-transitive means hypercyclic.

In this paper, we give a  generalization of the paper \cite{AA} in
which A.Ayadi proved that any abelian semigroup finitely generated
by matrices on $\mathbb{C}^{n}$ is never $k$-transitive for every
$k\geq 2$.

\medskip

Our principal results are the following:\\

\begin{thm}\label{T:00} For every $n \geq 1$, any abelian semigroup generated by n
affine maps on $\mathbb{C}^{n}$, is not locally hypercyclic.
\end{thm}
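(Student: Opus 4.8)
The plan is to reduce the affine problem to a linear one by the standard homogenization trick and then to bound the real dimension of the orbit closure. First I would attach to each generator $f_i=(A_i,a_i)$ the matrix $\tilde f_i=\begin{pmatrix}A_i & a_i\\ 0 & 1\end{pmatrix}\in M_{n+1}(\mathbb{C})$; since the $f_i$ commute as affine maps the $\tilde f_i$ commute as matrices, and $\tilde f_i\begin{pmatrix}x\\1\end{pmatrix}=\begin{pmatrix}f_i(x)\\1\end{pmatrix}$. Thus $O_{\mathcal{G}}(x)$ is carried, via $x\mapsto(x,1)$, onto the orbit of $(x,1)$ under the commuting linear semigroup $\widetilde{\mathcal{G}}=\langle\tilde f_1,\dots,\tilde f_n\rangle$, which lies entirely in the hyperplane $H=\{(y,1):y\in\mathbb{C}^{n}\}\cong\mathbb{C}^{n}$. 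Local hypercyclicity of $\mathcal{G}$ on $\mathbb{C}^{n}$ is then equivalent to local density in $H$ of this linear orbit, so it suffices to treat the linear action. As the $\tilde f_i$ commute, I would simultaneously triangularize them by a single $P\in GL(n+1,\mathbb{C})$, and note that one common eigenvalue is forced to be $1$ (the bottom-right entry), a rigid direction contributing nothing to the dynamics.

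The core is a real-dimension count for the closure $\overline{\widetilde{\mathcal{G}}}$ (passing if necessary to the group generated by the invertible $\tilde f_i$, still generated by $n$ elements), obtained by separating compact from non-compact directions. Writing a general element as $g=\prod_i\tilde f_i^{\,m_i}$, its semisimple part is governed by the eigenvalue-tuples $\lambda_{i,k}$, and I would track separately the moduli $\log|\lambda_{i,k}|\in\mathbb{R}$, the arguments $\arg\lambda_{i,k}\in\mathbb{T}$, and the unipotent off-diagonal parameters. The angular data lie in a torus of dimension at most $n$ (there are at most $n$ nontrivial eigenvalue-angles, the appended eigenvalue $1$ being trivial), so the compact directions of $\overline{\widetilde{\mathcal{G}}}$ number at most $n$. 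The moduli and the unipotent parameters are additive, i.e.\ non-compact $\mathbb{R}$-type directions; here I would invoke the Kronecker-type fact that a subgroup of $\mathbb{R}^{d}$ generated by $n$ elements cannot be dense unless $d\le n-1$, so that the closure of such a subgroup, whenever positive-dimensional, has dimension at most $n-1$. Hence the non-compact directions number at most $n-1$, and $\dim_{\mathbb{R}}\overline{\widetilde{\mathcal{G}}}\le (n-1)+n=2n-1<2n=\dim_{\mathbb{R}}H$.

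To convert the dimension bound into the conclusion I would avoid the pitfall that a low-dimensional group can have a dense orbit, by using the modulus map as a submersion. Restricting to the open set where the relevant coordinates are nonzero, the map $z\mapsto(\log|z_1|,\dots)$ is a submersion, and the moduli of the orbit points are confined to $\log|x|$ plus the closure of the finitely generated additive group spanned by the $n$ vectors $(\log|\lambda_{i,\cdot}|)_i$, which by the previous paragraph has empty interior in the modulus space. Its preimage is then a \emph{closed} subset of $\mathbb{C}^{n}$ with empty interior containing the whole orbit, which shows $\overline{O_{\mathcal{G}}(x)}$ is not locally dense. Points $x$ on coordinate hyperplanes, and non-invertible generators, only force the orbit into a proper affine subspace and cause no difficulty; I would dispose of them first.

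The main obstacle is the triangular (Jordan) coupling: when the $\tilde f_i$ are not diagonalizable the individual coordinate moduli are no longer multiplicative, so the clean submersion argument of the last paragraph must be run through the eigenvalue-moduli of the semisimple part while controlling the unipotent off-diagonal entries. The point to verify carefully is that these unipotent parameters grow only polynomially and enter as additional non-compact ($\mathbb{R}$-type) directions absorbed into the same ``$n-1$'' Kronecker budget, so that the total bound $2n-1$ — and with it the empty interior of the orbit closure — survives. Equivalently, one must check that the joint closure splits as (torus of dimension $\le n$) $\times$ (non-compact part of dimension $\le n-1$), which is exactly where the commutativity and the simultaneous triangularization are essential.
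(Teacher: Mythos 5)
There is a genuine gap, and it sits exactly where you placed your bets: in the conversion of the generator/dimension counts into ``the orbit closure has empty interior.'' Your reduction to a commuting linear semigroup on the hyperplane $\{1\}\times\mathbb{C}^{n}$ is the same homogenization the paper uses (via $\Phi$, then enlarging to $\widetilde{G}=\langle G,\mathbb{C}I_{n+1}\rangle$), and your Kronecker budget is sound: an additive subgroup of $\mathbb{R}^{d}$ with $n$ generators cannot be dense unless $d\le n-1$. But, as you yourself concede, a dimension bound on $\overline{\widetilde{\mathcal{G}}}$ proves nothing about orbit closures (a one-dimensional group can have dense orbits on a two-torus), so your entire proof rests on the modulus-map submersion. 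That step is invalid in general. It requires the additive group generated by the $n$ log-modulus vectors $(\log|\lambda_{i,k}|)_{k}$ to be non-dense \emph{in the modulus space itself}; but if the common normal form has $r$ blocks, the modulus space is $\mathbb{R}^{r-1}$, and whenever $r-1<n$ your Kronecker bound ``dimension $\le n-1$'' no longer implies empty interior there. Concretely, take $n\ge 2$ commuting maps $f_{i}(x)=\lambda_{i}x+a_{i}$ with scalar linear parts (so $r=2$ and the modulus space is $\mathbb{R}$): the group $\sum_{i}\mathbb{Z}\log|\lambda_{i}|$ can easily be dense in $\mathbb{R}$, your closed preimage is then the whole open set, and the argument yields nothing --- yet these semigroups are exactly ones Theorem~\ref{T:00} must handle. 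In this regime the obstruction lives in the \emph{joint} additive group mixing translation/unipotent parameters with the $2\pi i\mathbb{Z}$-directions, and no coordinate-wise map on phase space can see it, because once unipotent blocks are present the phase-space moduli are not affine functions of the group parameters.

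This missing equivalence is precisely what the paper imports rather than reproves: Theorem~\ref{T:5} (from Ayadi--Marzougui) states that density of an orbit of $\widetilde{G}$ is \emph{equivalent} to density of the additive group $\widetilde{\mathrm{g}}_{v_{0}}=\exp^{-1}(\widetilde{G})v_{0}$ at the distinguished point $v_{0}$; Proposition~\ref{p:11} then writes the relevant additive group $\mathfrak{q}_{w_{0}}\subset\mathbb{C}^{n}$ with exactly $p+(r-1)\le n+(n+1)-1=2n$ generators, uniformly in the block structure; and Waldschmidt's criterion (Proposition~\ref{p:12}) says density in $\mathbb{C}^{n}\cong\mathbb{R}^{2n}$ forces a rank-$(2n+1)$ condition, hence at least $2n+1$ generators. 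Your proposal would have to reprove the orbit-density/additive-group-density equivalence --- the hard content, valid for all block configurations and handling the unipotent coupling you flag as ``the main obstacle'' --- and the submersion trick is not a substitute for it. A secondary weakness: dismissing non-invertible generators and points of the coordinate hyperplanes is too quick, since the part of an orbit lying in a countable union of proper affine subspaces can still have closure with nonempty interior; the paper routes around this through $G^{*}=G\cap GL(n+1,\mathbb{C})$ and its minimality lemmas. But that is minor compared with the main gap.
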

\medskip

 \begin{thm}\label{T:1} Let $\mathcal{G}$ be an abelian sub-semigroup
of $MA(n,  \mathbb{C})$ and generated by $p$ affine maps, $p\geq
1$. Then $\mathcal{G}$ is not k-transitive for every $k\geq 2$.
\end{thm}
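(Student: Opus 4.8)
The plan is to produce, for an arbitrary candidate point $u=(x_1,\dots,x_k)\in(\mathbb{C}^n)^k$, a single continuous functional that confines the whole orbit $O^k_{\mathcal{G}}(u)$ to a proper complex line in $\mathbb{C}^k$, so that it cannot be dense. Write the $p$ generators as $f_i=(A_i,a_i)$. Since $\mathcal{G}$ is abelian, the linear parts commute, $A_iA_j=A_jA_i$, and comparing the two sides of $f_if_j=f_jf_i$ also yields the translation relation $(A_i-I)a_j=(A_j-I)a_i$ for all $i,j$. A commuting family over $\mathbb{C}$ has a common eigenvector; applying this to the transposes $A_i^{T}$ gives a nonzero common left-eigenvector $v$ with $vA_i=\lambda_i v$ for scalars $\lambda_i$. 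A general element of $\mathcal{G}$ has the form $F_m=f_1^{m_1}\cdots f_p^{m_p}$ with $m=(m_1,\dots,m_p)\in\mathbb{N}^p$, with linear part $M=A_1^{m_1}\cdots A_p^{m_p}$ and some translation $b(m)$; acting on $u$ it sends $x_\ell\mapsto Mx_\ell+b(m)$, so $v$ maps the $\ell$-th coordinate of the orbit point to $\Lambda(m)\,(vx_\ell)+v\,b(m)$, where $\Lambda(m)=\prod_i\lambda_i^{m_i}$.

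The key step is to evaluate $v\,b(m)$. Composing the powers gives the standard formula
\[
b(m)=\sum_{i=1}^{p}\Big(\prod_{j<i}A_j^{m_j}\Big)\big(I+A_i+\cdots+A_i^{m_i-1}\big)a_i ,
\]
and since $v$ is a left-eigenvector this collapses coordinatewise: with $c_i:=v\,a_i$ one finds $v\,b(m)=\sum_i\big(\prod_{j<i}\lambda_j^{m_j}\big)\,\sigma_{m_i}(\lambda_i)\,c_i$, where $\sigma_m(\lambda)=1+\lambda+\cdots+\lambda^{m-1}$. If some $\lambda_{i_0}\neq1$, then reading $(A_i-I)a_j=(A_j-I)a_i$ through $v$ gives $(\lambda_i-1)c_j=(\lambda_j-1)c_i$, which forces $c_j=c(\lambda_j-1)$ for a single constant $c$; then $\sigma_{m_i}(\lambda_i)(\lambda_i-1)=\lambda_i^{m_i}-1$ telescopes the sum to $v\,b(m)=c\,(\Lambda(m)-1)$. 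If instead every $\lambda_i=1$, then $\Lambda(m)\equiv1$ and $v\,b(m)=\sum_i m_i c_i$.

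In the first case the $\ell$-th coordinate of the projected orbit point becomes $\Lambda(m)\,(vx_\ell+c)-c$, so the whole image lies on the complex line $\{(t(vx_\ell+c)-c)_{\ell=1}^{k}:t\in\mathbb{C}\}$; in the second case it equals $(vx_\ell+t)_{\ell=1}^{k}$ with $t=\sum_i m_i c_i$, hence lies on the line $\{(vx_\ell+t)_{\ell=1}^{k}:t\in\mathbb{C}\}$. For $k\geq2$ each such line has real codimension at least $2$ in $\mathbb{C}^k$, so its closure is a proper subset of $\mathbb{C}^k$. Since $v$ induces a continuous surjection $(\mathbb{C}^n)^k\to\mathbb{C}^k$ and the continuous image of a dense set is dense, $O^k_{\mathcal{G}}(u)$ cannot be dense in $(\mathbb{C}^n)^k$; as $u$ was arbitrary, $\mathcal{G}$ is not $k$-transitive for any $k\geq2$. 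The main obstacle is the middle step: showing that the a priori complicated translation term $v\,b(m)$ simplifies to an affine function of the single scalar $\Lambda(m)$ (or to $\sum_i m_i c_i$), which is precisely where commutativity, through the relation $(A_i-I)a_j=(A_j-I)a_i$, is essential.
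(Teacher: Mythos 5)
Your argument is correct, and it takes a genuinely different route from the paper's. The paper proves Theorem \ref{T:1} by reduction to the linear case: Lemma \ref{L:00120001} shows that a dense diagonal orbit of $\mathcal{G}$ on $(\mathbb{C}^{n})^{k}$ is equivalent to a dense diagonal orbit of the enlarged matrix semigroup $\widetilde{G}$ (generated by $\Phi(\mathcal{G})$ and $\mathbb{C}I_{n+1}$) on $(\mathbb{C}^{n+1})^{k}$, and then it invokes Proposition \ref{P:P10} (Ayadi's theorem from \cite{AA}) that a finitely generated abelian semigroup of matrices is never $k$-transitive for $k\geq 2$. You instead argue directly inside $\mathbb{C}^{n}$: a common left eigenvector $v$ of the commuting linear parts, combined with the commutation relation $(A_{i}-I)a_{j}=(A_{j}-I)a_{i}$ on the translation parts, forces the image of every diagonal orbit under the continuous surjection $(y_{1},\dots,y_{k})\mapsto(vy_{1},\dots,vy_{k})$ to lie on a single complex affine line in $\mathbb{C}^{k}$; your telescoping computation $v\,b(m)=c\,(\Lambda(m)-1)$ (resp.\ $\sum_{i}m_{i}c_{i}$ when all $\lambda_{i}=1$) is the crux, and it checks out, including the degenerate cases $m_{i}=0$ and $p=1$. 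Since such a line is closed and proper in $\mathbb{C}^{k}$ once $k\geq 2$, no orbit can be dense. As to what each approach buys: the paper's proof is short given the cited result and reuses the $\Phi$, $\widetilde{G}$ machinery already set up for Theorem \ref{T:00}, while yours is self-contained and elementary (it needs neither \cite{AA} nor any normal form), it handles non-invertible affine maps without extra care, and it reproves the linear case as the special instance $a_{i}=0$. A further point in your favour: the paper's one-line deduction applies Proposition \ref{P:P10} to $\widetilde{G}$, which is \emph{not} finitely generated (it contains all of $\mathbb{C}I_{n+1}$), so strictly speaking that step requires additional justification; your argument avoids this issue entirely.
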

\bigskip

\section{\bf Notations and some results for abelian linear semigroup}

 Let $n\in\mathbb{N}_{0}$  be fixed, denote by:
\\
\textbullet \ $\mathbb{C}^{*}= \mathbb{C}\backslash\{0\}$,
$\mathbb{R}^{*}= \mathbb{R}\backslash\{0\}$  and $\mathbb{N}_{0}=
\mathbb{N}\backslash\{0\}$.
\
\\
\textbullet \;  $\mathcal{B}_{0} = (e_{1},\dots,e_{n+1})$ the
canonical basis of $\mathbb{C}^{n+1}$  and   $I_{n+1}$  the
identity matrix of $GL(n+1,\mathbb{C})$.
\
\\
For each $m=1,2,\dots, n+1$, denote by:
\\
\textbullet \; $\mathbb{T}_{m}(\mathbb{C})$ the set of matrices
over $\mathbb{C}$ of the form $$\left[\begin{array}{cccc}
    \mu & \ &  \ & 0 \\
  a_{2,1} & \mu &  \ &  \\\
  \vdots &  \ddots & \ddots & \ \\
  a_{m,1} & \dots & a_{m,m-1} & \mu\\
\end{array}\right]\ \ \  \label{eq1}$$
\
\\
\textbullet \; $\mathbb{T}_{m}^{\ast}(\mathbb{C})$  the group of
matrices of the form (~\ref{eq1}) with $\mu\neq 0$.
\
\\
Let  $r\in \mathbb{N}$ and $\eta
=(n_{1},\dots,n_{r})\in\mathbb{N}^{r}_{0}$ such that $n_{1}+\dots
+ n_{r}=n+1. $ In particular, $r\leq n+1$. Write
\
\\
\textbullet \; \ $\mathcal{K}_{\eta,r}(\mathbb{C}): =
\mathbb{T}_{n_{1}}(\mathbb{C})\oplus\dots \oplus
\mathbb{T}_{n_{r}}(\mathbb{C}).$ In particular if  $r=1$, then
$\mathcal{K}_{\eta,1}(\mathbb{C}) = \mathbb{T}_{n+1}(\mathbb{C})$
and  $\eta=(n+1)$.
\
\\
\textbullet \; $\mathcal{K}^{*}_{\eta,r}(\mathbb{C}): =
\mathcal{K}_{\eta,r}(\mathbb{C})\cap \textrm{GL}(n+1, \
\mathbb{C})$.\
\
\\
\textbullet \; $u_{0} = (e_{1,1},\dots,e_{r,1})\in
\mathbb{C}^{n+1}$ where
 $e_{k,1} = (1,0,\dots,
0)\in \mathbb{C}^{n_{k}}$, for $k=1,\dots, r$. So
$u_{0}\in\{1\}\times\mathbb{C}^{n}$.
\\
\textbullet \; $p_{2}:\mathbb{C}\times
\mathbb{C}^{n}\longrightarrow\mathbb{C}^{n}$ the second projection
defined by $p_{2}(x_{1},\dots,x_{n+1})=(x_{2},\dots,x_{n+1})$.\
\\
 \textbullet \; $e^{(k)} = (e^{(k)}_{1},\dots,  e^{(k)}_{r})\in \mathbb{C}^{n+1}$ where $$e^{(k)}_{j} = \left\{\begin{array}{c}
  0\in \mathbb{C}^{n_{j}}\ \ \mathrm{if}\ \ j\neq k \\
  e_{k,1}\ \ \ \ \ \ \ \ \mathrm{if}\ \ j = k  \\
\end{array}\right. \ \ \ \ \ \  for \ \ every\ \ 1\leq j,\ k\leq r.$$
\\
\textbullet \; $\textrm{exp} :\ \mathbb{M}_{n+1}(\mathbb{C})
\longrightarrow\textrm{GL}(n+1, \mathbb{C})$  is the matrix
exponential map; set $\textrm{exp}(M) = e^{M}$, $M\in
M_{n+1}(\mathbb{C})$.\
\\
\textbullet \; Define the map \  \  \ $\Phi\ :\ GA(n,\
\mathbb{C})\ \longrightarrow\
 GL(n+1,\ \mathbb{C})$ \\
  $$f =(A,a) \ \longmapsto\ \begin{bmatrix}
                     1  & 0 \\
                     a & A
                 \end{bmatrix}$$
We have the following composition formula
 $$\begin{bmatrix}
                     1  & 0 \\
                     a & A
                 \end{bmatrix}\begin{bmatrix}
                     1  & 0 \\
                     b & B
                 \end{bmatrix} = \begin{bmatrix}
                     1  & 0 \\
                     Ab+a & AB
                 \end{bmatrix}.$$
 Then  $\Phi$  is an injective homomorphism of groups.\ Write
  \
\\
\textbullet \; $G:=\Phi(\mathcal{G})$, it is an abelian
sub-semigroup of $GL(n+1, \mathbb{C})$.
\\
\textbullet \;  Define the map \ \ \ $\Psi\ :\ MA(n,\ \mathbb{C})\
\longrightarrow\
 M_{n+1}(\mathbb{C})$
$$f =(A,a) \ \longmapsto\ \begin{bmatrix}
                     0  & 0 \\
                     a & A
                 \end{bmatrix}$$
We can see that   $\Psi$  is injective and linear. Hence
$\Psi(MA(n, \mathbb{C}))$ is a vector subspace of
$M_{n+1}(\mathbb{C})$. We prove (see Lemma~\ref{L:10000001}) that
$\Phi$ and $\Psi$ are related by the following property
$$\textrm{exp}(\Psi(MA(n, \mathbb{C})))=\Phi(GA(n, \mathbb{C})).$$

Let consider the normal form of $\mathcal{G}$: By Proposition
~\ref{p:2}, there exists a $P\in \Phi(\textrm{GA}(n, \mathbb{C}))$
and a partition $\eta$ of $(n+1)$ such that $G'=P^{-1}GP\subset
\mathcal{K}^{*}_{\eta,r}(\mathbb{C})\cap\Phi(MA(n,\mathbb{C}))$.
For such a choice of matrix $P$,  we let
\
\\
\textbullet \; $v_{0} = Pu_{0}$. So $v_{0}\in
\{1\}\times\mathbb{C}^{n}$, since
  $P\in\Phi(GA(n, \mathbb{C}))$.
 \\
 \textbullet \; $w_{0} = p_{2}(v_{0})\in\mathbb{C}^{n}$. We have $v_{0}=(1, w_{0})$.\
 \\
\textbullet \; $\varphi=\Phi^{-1}(P)\in MA(n,\mathbb{C})$.
 \\
\textbullet\  $\mathrm{g} = \textrm{exp}^{-1}(G)\cap \left(
P(\mathcal{K}_{\eta,r}(\mathbb{C}))P^{-1}\right)$. If $G\subset
\mathcal{K}^{*}_{\eta,r}(\mathbb{C})$, we have $P=I_{n+1}$ and
$\mathrm{g} = \textrm{exp}^{-1}(G)\cap
\mathcal{K}_{\eta,r}(\mathbb{C})$.
\
\\
\textbullet\  $\mathrm{g}^{1} = \mathrm{g}\cap
\Psi(MA(n,\mathbb{C}))$. It is an additive sub-semigroup of
$M_{n+1}(\mathbb{C})$ (because by  Lemma ~\ref{LL:002},
$\mathrm{g}$ is an additive sub-semigroup of
$M_{n+1}(\mathbb{C})$).
\
\\
\textbullet\ $\mathcal{G}^{*}=\mathcal{G}\cap GA(n, \mathbb{C})$.
\
\\
\textbullet\  $\mathrm{g}^{1}_{u} = \{Bu: \ B\in
\mathrm{g}^{1}\}\subset \mathbb{C}^{n+1}, \ \
u\in\mathbb{C}^{n+1}.$
\
\\
\textbullet\   $\mathfrak{q} = \Psi^{-1}(\mathrm{g}^{1})\subset
MA(n, \mathbb{C})$.

 Then $\mathfrak{q}$ is an additive sub-semigroup of $MA(n,\mathbb{C})$ and we have $\Psi(\mathfrak{q})=\mathrm{g}^{1}$.
  By Corollary~\ref{r:1}, we have \ $exp(\Psi(\mathfrak{q}))=\Phi(\mathcal{G}^{*}).$
\\
\textbullet\  $\mathfrak{q}_{v} =\{f(v),\ \ f\in
\mathfrak{q}\}\subset \mathbb{C}^{n}, \ \ v\in\mathbb{C}^{n}.$
\
\\

For groups of affine maps on  $\mathbb{K}^{n}$
($\mathbb{K}=\mathbb{R}$ or $\mathbb{C}$), their dynamics were
recently initiated for some classes in different point of view,
(see for instance, \cite{Ja}, \cite{Ku}, \cite{be}, \cite{AMY}).
The purpose here is to give analogous results as for linear
abelian sub-semigroup of $M_{n}(\mathbb{C})$.\
 \\
 \\

\ \\ In \cite{AAFF}, the authors proved the following Proposition:

 \begin{prop}\label{p:2}$($\cite{AAFF}, Proposition 2.1$)$ Let  $\mathcal{G}$ be an abelian subgroup of
$GA(n,\mathbb{C})$ and  $G=\Phi(\mathcal{G})$. Then there exists
$P\in \Phi(GA(n,\mathbb{C}))$ such that $P^{-1}GP$ is a subgroup
of $\mathcal{K}^{*}_{\eta,r}(\mathbb{C})\cap\Phi(GA(n,
\mathbb{C}))$,
 for some $r\leq n+1$ and $\eta=(n_{1},\dots,n_{r})\in\mathbb{N}_{0}^{r}$.
\end{prop}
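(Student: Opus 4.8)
The plan is to read Proposition~\ref{p:2} as a simultaneous block-triangularization statement for the commuting family $G=\Phi(\mathcal{G})\subset GL(n+1,\mathbb{C})$, and then to verify that the conjugating matrix can be kept inside $\Phi(GA(n,\mathbb{C}))$. The first thing I would record is the rigidity forced by $\Phi$: every $M=\Phi(f)$ has first row equal to $e_1^{T}=(1,0,\dots,0)$, so $e_1^{T}M=e_1^{T}$ for all $M\in G$. Equivalently, the coordinate functional $x\mapsto x_1$ is $G$-invariant, the hyperplane $H=\{x\in\mathbb{C}^{n+1}:x_1=0\}$ is $G$-invariant, and $G$ acts trivially on the line $\mathbb{C}^{n+1}/H$. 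I would also note the identity $\{P\in GL(n+1,\mathbb{C}):e_1^{T}P=e_1^{T}\}=\Phi(GA(n,\mathbb{C}))$, which reduces the final requirement of the proposition to the single condition that the triangularizing $P$ can be chosen with first row $e_1^{T}$.

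Next I would invoke the standard structure theory of commuting operators. Since the subalgebra of $M_{n+1}(\mathbb{C})$ generated by $G$ is finite-dimensional and commutative, $\mathbb{C}^{n+1}$ splits into joint generalized eigenspaces $\mathbb{C}^{n+1}=\bigoplus_{k=1}^{r}V_k$, each $V_k$ being $G$-invariant and carrying a character $\chi_k:G\to\mathbb{C}^{*}$ with $(g-\chi_k(g))|_{V_k}$ nilpotent for every $g\in G$. On each $V_k$ the operators $N_k(g)=(g-\chi_k(g))|_{V_k}$ form a commuting family of nilpotents, which admits a common full descending flag; choosing a basis adapted to this flag writes each $g|_{V_k}$ as a lower-triangular matrix with constant diagonal $\chi_k(g)$, i.e., an element of $\mathbb{T}_{n_k}^{\ast}(\mathbb{C})$ with $n_k=\dim V_k$. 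Concatenating these bases produces some $P_0\in GL(n+1,\mathbb{C})$ with $P_0^{-1}GP_0\subset\mathcal{K}^{*}_{\eta,r}(\mathbb{C})$, for $\eta=(n_1,\dots,n_r)$ and $r\leq n+1$.

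The main obstacle is upgrading $P_0$ to a matrix with first row $e_1^{T}$, i.e., to an element of $\Phi(GA(n,\mathbb{C}))$, and for this I would exploit the invariant hyperplane $H$. Because $H$ is $G$-invariant it is compatible with the isotypic splitting, so $H=\bigoplus_k(H\cap V_k)$, and since $H$ has codimension one exactly one index, say $k_0$, satisfies $\dim(H\cap V_{k_0})=\dim V_{k_0}-1$ while $V_k\subset H$ for $k\neq k_0$. Comparing the one-dimensional quotients gives $V_{k_0}/(H\cap V_{k_0})\cong\mathbb{C}^{n+1}/H$ as $G$-modules, whence $\chi_{k_0}\equiv1$. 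I would then build the adapted basis so that the block $V_{k_0}$ comes first, its descending flag is chosen to pass through the invariant subspace $H\cap V_{k_0}$, and its top basis vector $b_1$ is taken outside $H$ and normalized so that $x_1(b_1)=1$; all remaining basis vectors then lie in $H$ and have vanishing first coordinate. The resulting $P$ satisfies $e_1^{T}P=e_1^{T}$, hence $P\in\Phi(GA(n,\mathbb{C}))$, while still conjugating $G$ into $\mathcal{K}^{*}_{\eta,r}(\mathbb{C})$. I expect the delicate point to be precisely this coordination of the nilpotent flag with the affine hyperplane, securing at once the lower-triangular constant-diagonal form inside each block and the first-row normalization of $P$.
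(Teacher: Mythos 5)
Your proof is correct, but there is essentially nothing in the paper to compare it against: Proposition~\ref{p:2} is imported verbatim from \cite{AAFF} (Proposition 2.1 there), the paper gives no proof of it, and the only related remark in the text is that the same (cited) proof also yields Proposition~\ref{p:0011}. Your argument should therefore be judged as a self-contained substitute for the citation, and it holds up. The reduction to the normalization $e_{1}^{T}P=e_{1}^{T}$ is legitimate, because $\{Q\in GL(n+1,\mathbb{C}):\ e_{1}^{T}Q=e_{1}^{T}\}=\Phi(GA(n,\mathbb{C}))$ is a subgroup of $GL(n+1,\mathbb{C})$; hence once $P$ and all elements of $G$ fix the row vector $e_{1}^{T}$, the containment $P^{-1}GP\subset\Phi(GA(n,\mathbb{C}))$ is automatic and only the containment in $\mathcal{K}^{*}_{\eta,r}(\mathbb{C})$ has to be arranged. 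The spectral part is standard, and the two places where you compress the argument both admit one-line justifications that should be made explicit: (a) the splitting $H=\bigoplus_{k}(H\cap V_{k})$ holds because the projections onto the joint generalized eigenspaces $V_{k}$ are polynomials in finitely many elements of $G$, hence preserve every $G$-invariant subspace; (b) a $G$-invariant complete flag of $V_{k_{0}}$ passing through $H\cap V_{k_{0}}$ exists because one can simultaneously triangularize the commuting restrictions $g|_{H\cap V_{k_{0}}}$ to obtain an invariant complete flag of $H\cap V_{k_{0}}$ and then append $V_{k_{0}}$ on top. With these glosses, placing the block $V_{k_{0}}$ first, taking $b_{1}\notin H$ scaled so its first coordinate is $1$, and taking all remaining basis vectors inside $H$ gives $e_{1}^{T}P=e_{1}^{T}$ and $P^{-1}GP\subset\mathcal{K}^{*}_{\eta,r}(\mathbb{C})\cap\Phi(GA(n,\mathbb{C}))$, as required; your identification $\chi_{k_{0}}\equiv 1$ is precisely what forces the $(1,1)$ entry of each $P^{-1}gP$ to be $1$, which membership in $\Phi(GA(n,\mathbb{C}))$ demands. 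Compared with the bare citation, your route buys transparency: it proves the linear normal form (joint generalized eigenspaces plus a common invariant flag) and the affine normalization in one construction, isolating the single new ingredient of the affine setting, namely the $G$-invariant hyperplane $\{x_{1}=0\}$ and the trivial character attached to it.
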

\medskip

This proposition can be generalized for any abelian affine
semigroup as follow:
\begin{prop}\label{p:0011}   Let  $\mathcal{G}$ be an abelian sub-semigroup of
$MA(n,\mathbb{C})$ and  $G=\Phi(\mathcal{G})$. Then there exists
$P\in \Phi(GA(n,\mathbb{C}))$ such that $P^{-1}GP$ is a subgroup
of $\mathcal{K}^{*}_{\eta,r}(\mathbb{C})\cap\Phi(GA(n,
\mathbb{C}))$,
 for some $r\leq n+1$ and $\eta=(n_{1},\dots,n_{r})\in\mathbb{N}_{0}^{r}$.
\end{prop}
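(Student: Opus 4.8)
The plan is to reduce the statement to a simultaneous block-triangularization of the commuting family $G = \Phi(\mathcal{G})$, running the argument of Proposition \ref{p:2} while checking that it uses only commutativity and the affine shape of the matrices, never the group axioms. Since $\mathcal{G}$ is abelian and $\Phi$ is an injective semigroup homomorphism (the composition formula shows $\Phi(f)\Phi(g)=\Phi(f\circ g)$), the set $G$ consists of pairwise commuting matrices in $M_{n+1}(\mathbb{C})$, and every $M=\Phi(f)$ has first row $e_1^{\mathrm{T}}=(1,0,\dots,0)$. Hence $e_1^{\mathrm{T}}$ is a common left eigenvector with eigenvalue $1$, the hyperplane $H=\ker e_1^{\mathrm{T}}=\{x_1=0\}$ is a common invariant subspace, and $Me_1\in e_1+H$ for every $M$, so $G$ induces the identity on the quotient $\mathbb{C}^{n+1}/H$. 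These are precisely the structural inputs used in the group case, and all of them survive for a sub-semigroup.

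The partition $\eta$ and the block form come from the common generalized eigenspace decomposition $\mathbb{C}^{n+1}=V_1\oplus\dots\oplus V_r$ of the commuting family: on each $V_s$ every $M$ acts as $\lambda_s(M) I + N_s(M)$ with $N_s(M)$ nilpotent and the $N_s(M)$ mutually commuting, so a basis of $V_s$ that simultaneously strictly-lower-triangularizes the nilpotent parts realizes the $\mathbb{T}_{n_s}(\mathbb{C})$ shape with $n_s=\dim V_s$. This is pure commuting-operator linear algebra and needs no inverses; non-invertible elements merely contribute blocks with diagonal $\mu=0$. The feature that makes an \emph{affine} conjugator available is that the spectral projections onto the $V_s$ lie in the commutative algebra generated by $G$, hence preserve the invariant hyperplane $H$; therefore $H=\bigoplus_s (H\cap V_s)$, and since $\mathbb{C}^{n+1}/H$ is one-dimensional exactly one block, say $V_1$, satisfies $V_1\not\subseteq H$, while $V_s\subseteq H$ for $s\geq 2$. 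On that distinguished block the induced action on the line $V_1/(H\cap V_1)$ is the identity, so $\lambda_1\equiv 1\neq 0$.

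This last fact is what I expect to be the crux, and it is what lets me force the conjugating matrix into $\Phi(GA(n,\mathbb{C}))$ rather than into a general $GL(n+1,\mathbb{C})$. I would take $P=[v_1\mid\dots\mid v_{n+1}]$ with $v_1\in V_1$ chosen so that its first coordinate is $1$ (possible because $V_1$ projects onto $\mathbb{C}^{n+1}/H$) and serving as the leading vector of the first triangular block, with invariant flag $V_1\supset (V_1\cap H)\supset\cdots$, and with $v_2,\dots,v_{n+1}\in H$ completing the adapted bases of all the blocks. Then the first row of $P$ is $e_1^{\mathrm{T}}$ and $P$ is invertible, i.e. $P\in\Phi(GA(n,\mathbb{C}))$; since $e_1^{\mathrm{T}}P^{-1}=e_1^{\mathrm{T}}$, conjugation by $P$ maps $\Phi(MA(n,\mathbb{C}))$ into itself, so $P^{-1}GP$ lies in $\mathcal{K}_{\eta,r}(\mathbb{C})\cap\Phi(MA(n,\mathbb{C}))$ with the distinguished first block always invertible. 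For the invertible elements—those of $\mathcal{G}^{*}=\mathcal{G}\cap GA(n,\mathbb{C})$, where $\det M=\prod_s\lambda_s(M)^{n_s}\neq 0$ forces every $\mu$ nonzero—this places them in $\mathcal{K}^{*}_{\eta,r}(\mathbb{C})\cap\Phi(GA(n,\mathbb{C}))$, which is the asserted normal form. The one point to verify with care is exactly the compatibility of the affine flag with the eigenspace decomposition in the previous paragraph; everything else is the verbatim argument of \cite{AAFF}.
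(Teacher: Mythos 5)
Your proof is correct and is essentially the paper's own argument: the paper's entire ``proof'' of Proposition \ref{p:0011} is the remark that the proof of Proposition \ref{p:2} (Proposition 2.1 of \cite{AAFF}) remains valid for semigroups, and what you wrote out is exactly that argument --- common generalized-eigenspace decomposition of the commuting family $G=\Phi(\mathcal{G})$, compatibility with the invariant hyperplane $\{x_{1}=0\}$ via spectral projections in the generated commutative algebra, the distinguished block with $\mu\equiv 1$, and an adapted basis giving a conjugator $P$ with first row $e_{1}^{\mathrm{T}}$ --- together with the verification that no step needs inverses or group structure. Your caveat that non-invertible elements of $\mathcal{G}$ can only land in $\mathcal{K}_{\eta,r}(\mathbb{C})\cap\Phi(MA(n,\mathbb{C}))$, with only $\Phi(\mathcal{G}\cap GA(n,\mathbb{C}))$ landing in $\mathcal{K}^{*}_{\eta,r}(\mathbb{C})\cap\Phi(GA(n,\mathbb{C}))$, is a legitimate and needed correction of the statement as printed (the paper itself uses the proposition in this weaker, correct form later on), not a gap in your argument.
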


The same proof of Proposition~\ref{p:2} remained valid for
Proposition~\ref{p:0011}.

\medskip
 \ \\ \\ For such matrix $P$ define $v_{0}=Pu_{0}$. Let $L$
be an abelian subgroup of $\mathcal{K}^{*}_{\eta,r}(\mathbb{C})$.
denote by:\ \\ -
$V=\underset{k=1}{\overset{r}{\prod}}\mathbb{C}^{*}\times
\mathbb{C}^{n_{k}-1}$. One has $\mathbb{C}^{n}\backslash
V=\underset{k=1}{\overset{r}{\bigcup}}H_{k}$, where
$$H_{k}=\left\{u=[u_{1},\dots,u_{r}]^{T},\ \ u_{k}\in \{0\}\times
\mathbb{C}^{n_{k}-1},\ u_{j}\in\mathbb{C}^{n_{j}},\ j\neq k
\right\}.$$ See that each $H_{k}$ is a $L$-invariant vector space
of dimension $n-1$.
\medskip

\begin{lem}\label{L:10000001} $($\cite{AAFF}, Lemma 2.8$)$ $exp(\Psi(MA(n, \mathbb{C}))=GA(n, \mathbb{C})$.
\end{lem}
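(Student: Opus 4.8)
The plan is to compute the exponential of an element of $\Psi(MA(n,\mathbb{C}))$ in closed form and compare it with the shape of the matrices in $\Phi(GA(n,\mathbb{C}))$; by definition of $\Phi$ the latter are exactly the matrices $\begin{bmatrix} 1 & 0 \\ b & B \end{bmatrix}$ with $B\in GL(n,\mathbb{C})$. Writing $f=(A,a)$ so that $\Psi(f)=\begin{bmatrix} 0 & 0 \\ a & A \end{bmatrix}$, a short induction on the block structure gives $\Psi(f)^{k}=\begin{bmatrix} 0 & 0 \\ A^{k-1}a & A^{k} \end{bmatrix}$ for $k\geq 1$. Summing the exponential series then yields
$$\mathrm{exp}(\Psi(f))=\begin{bmatrix} 1 & 0 \\ h(A)\,a & e^{A} \end{bmatrix}, \qquad h(A):=\sum_{j\geq 0}\frac{A^{j}}{(j+1)!}.$$
Because $\det(e^{A})=e^{\operatorname{tr}A}\neq 0$, the lower-right block is always invertible, so $\mathrm{exp}(\Psi(f))=\Phi(e^{A},h(A)a)\in\Phi(GA(n,\mathbb{C}))$. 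This disposes of the inclusion $\mathrm{exp}(\Psi(MA(n,\mathbb{C})))\subseteq\Phi(GA(n,\mathbb{C}))$ immediately.

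For the reverse inclusion I would start from an arbitrary $\begin{bmatrix} 1 & 0 \\ b & B \end{bmatrix}=\Phi(B,b)$ with $B\in GL(n,\mathbb{C})$ and construct $(A,a)$ with $e^{A}=B$ and $h(A)a=b$. Since $\mathbb{C}$ is algebraically closed, the matrix exponential $\mathrm{exp}\colon M_{n}(\mathbb{C})\to GL(n,\mathbb{C})$ is surjective, so a logarithm $A$ of $B$ exists; once $A$ is fixed, the remaining equation $h(A)a=b$ is a linear system in $a$, solvable for every $b$ as soon as $h(A)$ is invertible, with $a=h(A)^{-1}b$.

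The heart of the matter is thus the invertibility of $h(A)$, and this is where the choice of logarithm enters. The scalar function $h(z)=(e^{z}-1)/z$ (with $h(0)=1$) is entire, and its zeros are exactly the points $2\pi i k$, $k\in\mathbb{Z}\setminus\{0\}$. As $h$ is given by a power series, the eigenvalues of $h(A)$ are the numbers $h(\lambda)$ with $\lambda$ running over the eigenvalues of $A$; hence $h(A)$ is invertible iff no eigenvalue of $A$ is a nonzero integer multiple of $2\pi i$. I would therefore not take an arbitrary logarithm but one whose eigenvalues all lie in the strip $\{\,-\pi<\operatorname{Im}z\leq\pi\,\}$ (a principal logarithm): such a logarithm of $B$ always exists, its eigenvalues automatically avoid $2\pi i k$ for $k\neq 0$, and the eigenvalue $1$ of $B$ simply produces $\lambda=0$, which is harmless since $h(0)=1$. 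With this $A$ the matrix $h(A)$ is invertible, $a=h(A)^{-1}b$ solves the system, and the two sets coincide.

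The one genuine obstacle is precisely this invertibility: a careless choice of $A$ could place an eigenvalue at some $2\pi i k$ and make $h(A)$ singular, so the surjectivity of $\mathrm{exp}$ must be used in the sharper form that a logarithm can be chosen inside the horizontal strip. The block computation of the exponential and the solution of the linear system are otherwise routine.
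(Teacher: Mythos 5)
Your argument is correct, but there is nothing in the paper to compare it against: the paper states this lemma as a quotation of \cite{AAFF}, Lemma 2.8, and gives no proof at all (the statement as printed even has a typo --- the right-hand side should be $\Phi(GA(n,\mathbb{C}))$, as the paper itself writes in Section 2). Taken on its own, your proof is complete: the block computation $\exp(\Psi(A,a))=\begin{bmatrix} 1 & 0\\ h(A)a & e^{A}\end{bmatrix}$ with $h(z)=(e^{z}-1)/z$, $h(0)=1$, is right; and for surjectivity you correctly isolated the real issue, namely that $h(A)$ is invertible iff no eigenvalue of $A$ lies in $2\pi i(\mathbb{Z}\setminus\{0\})$, which you guarantee by choosing a logarithm of $B$ with spectrum in the strip $-\pi<\operatorname{Im}z\leq\pi$.

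It is worth noting how your route differs from the one the cited source (and this paper, implicitly) relies on. In the proof of Corollary 2.6 here, the author invokes the fact that any $N$ with $e^{N}\in\Phi(GA(n,\mathbb{C}))$ lies in $\Psi(MA(n,\mathbb{C}))+2i\pi\mathbb{Z}I_{n+1}$; that reflects the standard argument for this lemma: take a logarithm $N$ of $\Phi(B,b)$ that is a \emph{polynomial} in $\Phi(B,b)$, so $N$ inherits the lower block-triangular shape with scalar $(1,1)$-entry $\nu$ satisfying $e^{\nu}=1$, i.e.\ $\nu\in 2i\pi\mathbb{Z}$, and then $N-\nu I_{n+1}\in\Psi(MA(n,\mathbb{C}))$ is still a logarithm of $\Phi(B,b)$. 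That trick never mentions the function $h$ or its zeros; the price is that one must know a logarithm can be chosen as a polynomial in the matrix. Your version instead solves the linear system $h(A)a=b$ explicitly; the price is the spectral-mapping discussion of $h$, but it makes the dependence on the choice of logarithm transparent, and you correctly flagged the only genuine pitfall (a careless logarithm with an eigenvalue at $2\pi ik$, $k\neq 0$, makes $h(A)$ singular). Both are valid; yours is self-contained and arguably more elementary.
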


Denote by $G^{*}=G\cap GL(n+1, \mathbb{C})$.

\begin{lem}\label{L:21013}$($ ~\cite{aA-Hm12}$)$ One has
$exp(\mathrm{g})=G^{*}$.
\end{lem}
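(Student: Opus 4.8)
The plan is to prove the two inclusions $\exp(\mathrm{g}) \subseteq G^{*}$ and $G^{*} \subseteq \exp(\mathrm{g})$ separately. The first is essentially formal, while the second carries the real content and rests on producing a logarithm of a block lower-triangular invertible matrix that stays inside the algebra $\mathcal{K}_{\eta,r}(\mathbb{C})$.

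For the inclusion $\exp(\mathrm{g}) \subseteq G^{*}$, I would simply unwind the definitions. If $M \in \mathrm{g} = \exp^{-1}(G) \cap \big(P\,\mathcal{K}_{\eta,r}(\mathbb{C})\,P^{-1}\big)$, then $M \in \exp^{-1}(G)$ gives $\exp(M) \in G$ at once. Since the matrix exponential always lands in $GL(n+1, \mathbb{C})$, we obtain $\exp(M) \in G \cap GL(n+1, \mathbb{C}) = G^{*}$, and no further work is needed here.

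The reverse inclusion $G^{*} \subseteq \exp(\mathrm{g})$ is where the structure of $\mathcal{K}_{\eta,r}(\mathbb{C})$ enters. Given $g \in G^{*}$, Proposition~\ref{p:2} (in the semigroup form, Proposition~\ref{p:0011}) places $P^{-1} g P$ in $\mathcal{K}^{*}_{\eta,r}(\mathbb{C})$; that is, it is block lower-triangular with each diagonal block $T_{k} \in \mathbb{T}^{*}_{n_{k}}(\mathbb{C})$ of the form $T_{k} = \mu_{k}(I_{n_{k}} + N_{k})$ with $\mu_{k} \neq 0$ and $N_{k}$ strictly lower-triangular, hence nilpotent. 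For each block I would set
$$L_{k} = (\log \mu_{k})\,I_{n_{k}} + \sum_{j \geq 1} \frac{(-1)^{j+1}}{j}\,N_{k}^{j},$$
choosing any branch of $\log \mu_{k}$; the series terminates because $N_{k}$ is nilpotent, and the strictly lower-triangular remainder keeps $L_{k} \in \mathbb{T}_{n_{k}}(\mathbb{C})$ while $\exp(L_{k}) = T_{k}$. Assembling the blocks yields $L = L_{1} \oplus \dots \oplus L_{r} \in \mathcal{K}_{\eta,r}(\mathbb{C})$ with $\exp(L) = P^{-1} g P$. Setting $M = P L P^{-1} \in P\,\mathcal{K}_{\eta,r}(\mathbb{C})\,P^{-1}$ and using that exponentiation commutes with conjugation, I get $\exp(M) = P \exp(L) P^{-1} = g \in G$, so $M \in \exp^{-1}(G)$ and therefore $M \in \mathrm{g}$, giving $g = \exp(M) \in \exp(\mathrm{g})$.

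The main obstacle is precisely this block-logarithm step: one must guarantee that an invertible element of $\mathcal{K}^{*}_{\eta,r}(\mathbb{C})$ admits a logarithm that remains in $\mathcal{K}_{\eta,r}(\mathbb{C})$ rather than merely in $M_{n+1}(\mathbb{C})$. The decomposition $T_{k} = \mu_{k}(I_{n_{k}} + N_{k})$ with $\mu_{k} \neq 0$ is exactly what makes this work, since it splits the logarithm into a scalar part and a nilpotent (hence polynomial, finite) part, both of which preserve the lower-triangular shape. The freedom in the branch of $\log \mu_{k}$ also shows that $\mathrm{g}$ is far from unique, consistent with $\exp$ being many-to-one. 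Once this construction is in hand, both inclusions close and the equality $\exp(\mathrm{g}) = G^{*}$ follows.
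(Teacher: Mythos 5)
Your proof is correct. Note, however, that the paper itself gives no argument for this lemma at all: it is imported verbatim from \cite{aA-Hm12} as a known result, so there is no in-paper proof to compare against; what you have done is supply the missing argument. Your two inclusions are both sound. The easy direction $\exp(\mathrm{g})\subseteq G^{*}$ is exactly the formal unwinding of $\mathrm{g}=\exp^{-1}(G)\cap\bigl(P\,\mathcal{K}_{\eta,r}(\mathbb{C})\,P^{-1}\bigr)$ together with invertibility of any matrix exponential. For the hard direction, your block-logarithm construction is the standard one and it works: since $P^{-1}gP$ lies in $\mathcal{K}^{*}_{\eta,r}(\mathbb{C})$ (the same fixed $P$ from the normal form of Proposition~\ref{p:0011} that enters the definition of $\mathrm{g}$), each diagonal block factors as $\mu_{k}(I_{n_{k}}+N_{k})$ with $\mu_{k}\neq 0$ and $N_{k}$ nilpotent strictly lower-triangular, so the scalar logarithm plus the terminating series $\sum_{j\geq 1}\frac{(-1)^{j+1}}{j}N_{k}^{j}$ gives a logarithm that stays inside $\mathbb{T}_{n_{k}}(\mathbb{C})$ (the two summands commute, so the exponential splits as a product), and conjugating back by $P$ lands the assembled logarithm in $P\,\mathcal{K}_{\eta,r}(\mathbb{C})\,P^{-1}\cap\exp^{-1}(G)=\mathrm{g}$. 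One small terminological slip: an element of $\mathcal{K}_{\eta,r}(\mathbb{C})$ is block-diagonal with lower-triangular blocks, not merely block lower-triangular; your computation treats it correctly as a direct sum $T_{1}\oplus\dots\oplus T_{r}$, so nothing breaks. Your closing observation that the branch freedom in $\log\mu_{k}$ makes $\mathrm{g}$ non-unique is also consistent with Corollary~\ref{C:21012}, which records precisely this $2i\pi\mathbb{Z}I_{n+1}$ ambiguity.
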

\medskip

\begin{cor}\label{C:21012} Let $G=\Phi(\mathcal{G})$. We have $\mathrm{g}= \mathrm{g}^{1}+2i\pi \mathbb{Z}I_{n+1}$.\
\end{cor}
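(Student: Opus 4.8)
The plan is to prove the two inclusions separately, the heart of the matter being a uniform description of the first row of every matrix in $\mathrm{g}$. Throughout I write $e_{1}$ for the first vector of $\mathcal{B}_{0}$ and read ``first row of $X$ equals $(\nu,0,\dots,0)$'' as the left-eigenvector relation $e_{1}^{T}X=\nu e_{1}^{T}$.

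The first and only delicate step is to show that every $M\in\mathrm{g}$ satisfies $e_{1}^{T}M=\nu e_{1}^{T}$ for some scalar $\nu\in\mathbb{C}$. By definition $\mathrm{g}\subset P\mathcal{K}_{\eta,r}(\mathbb{C})P^{-1}$, so I would write $M=PNP^{-1}$ with $N\in\mathcal{K}_{\eta,r}(\mathbb{C})$. Since $N$ is block diagonal with first block in $\mathbb{T}_{n_{1}}(\mathbb{C})$, which is lower triangular with constant diagonal $\mu_{1}$, its first row is $(\mu_{1},0,\dots,0)$, i.e. $e_{1}^{T}N=\mu_{1}e_{1}^{T}$. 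The key point is that $P\in\Phi(GA(n,\mathbb{C}))$ and, $\Phi(GA(n,\mathbb{C}))$ being a subgroup of $GL(n+1,\mathbb{C})$, also $P^{-1}\in\Phi(GA(n,\mathbb{C}))$; every matrix in the image of $\Phi$ has first row $(1,0,\dots,0)=e_{1}^{T}$. Hence $e_{1}^{T}P=e_{1}^{T}$ and $e_{1}^{T}P^{-1}=e_{1}^{T}$, and therefore $e_{1}^{T}M=e_{1}^{T}PNP^{-1}=\mu_{1}e_{1}^{T}P^{-1}=\mu_{1}e_{1}^{T}$, so $\nu=\mu_{1}$. In words: conjugation by $P$ fixes the left action of $e_{1}^{T}$, so the whole set $P\mathcal{K}_{\eta,r}(\mathbb{C})P^{-1}$ stays inside the matrices whose first row is supported on the first coordinate.

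Next I would pin down $\nu$ using the exponential. From $e_{1}^{T}M=\nu e_{1}^{T}$ one gets $e_{1}^{T}M^{j}=\nu^{j}e_{1}^{T}$ for all $j\geq 0$, and summing the series gives $e_{1}^{T}\exp(M)=e^{\nu}e_{1}^{T}$. But $M\in\mathrm{g}$ forces $\exp(M)\in G=\Phi(\mathcal{G})$, whose elements all have first row $e_{1}^{T}$; comparing first rows yields $e^{\nu}=1$, hence $\nu=2i\pi k$ for some $k\in\mathbb{Z}$. For the inclusion $\subseteq$ I then set $M'=M-2i\pi k\,I_{n+1}$: its first row is $(\nu,0,\dots,0)-2i\pi k(1,0,\dots,0)=(0,\dots,0)$, so $M'\in\Psi(MA(n,\mathbb{C}))$; moreover $\exp(M')=\exp(M)\exp(-2i\pi k\,I_{n+1})=\exp(M)\in G$ because $2i\pi k\,I_{n+1}$ is central and $\exp(-2i\pi k\,I_{n+1})=e^{-2i\pi k}I_{n+1}=I_{n+1}$, and $M'\in P\mathcal{K}_{\eta,r}(\mathbb{C})P^{-1}$ since $I_{n+1}\in\mathcal{K}_{\eta,r}(\mathbb{C})$ and $\mathcal{K}_{\eta,r}(\mathbb{C})$ is a vector space. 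Thus $M'\in\mathrm{g}\cap\Psi(MA(n,\mathbb{C}))=\mathrm{g}^{1}$ and $M=M'+2i\pi k\,I_{n+1}$.

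The reverse inclusion $\supseteq$ is the same computation read backwards: for $M'\in\mathrm{g}^{1}$ and $k\in\mathbb{Z}$ one has $\exp(M'+2i\pi k\,I_{n+1})=\exp(M')\in G$ and $M'+2i\pi k\,I_{n+1}\in P\mathcal{K}_{\eta,r}(\mathbb{C})P^{-1}$, so $M'+2i\pi k\,I_{n+1}\in\mathrm{g}$. I expect the main obstacle to be purely the structural observation of the second paragraph, namely verifying that $e_{1}^{T}$ is preserved under conjugation by $P\in\Phi(GA(n,\mathbb{C}))$ so that the top-left diagonal value $\mu_{1}$ of the normal form controls the entire first row of $M$; once that is in hand, the remaining arguments are direct manipulations of $\exp$ against the defining relation $M\in\exp^{-1}(G)$ and require no further input. (One could alternatively route the eigenvalue computation through Lemma~\ref{L:21013} and the identity $G^{*}=\Phi(\mathcal{G}^{*})$, but the direct first-row argument is self-contained.)
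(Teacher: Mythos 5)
Your proof is correct, and its skeleton coincides with the paper's: both inclusions are handled by translating by $2i\pi k I_{n+1}$, using that $\mathcal{K}_{\eta,r}(\mathbb{C})$ is a vector space containing $I_{n+1}$, that such translations do not change the exponential, and that $\mathrm{g}^{1}=\mathrm{g}\cap\Psi(MA(n,\mathbb{C}))$. The genuine difference is in the one nontrivial step, namely producing $k\in\mathbb{Z}$ with $M-2i\pi k I_{n+1}\in\Psi(MA(n,\mathbb{C}))$: the paper dispatches this by citing a ``Lemma L:4'' whose label resolves to nothing in this paper (it is presumably intended to point to a statement in \cite{AAFF}), whereas you prove it outright from the observation that $e_{1}^{T}P^{\pm 1}=e_{1}^{T}$, so that every $M\in P\mathcal{K}_{\eta,r}(\mathbb{C})P^{-1}$ satisfies $e_{1}^{T}M=\nu e_{1}^{T}$, whence $e_{1}^{T}\exp(M)=e^{\nu}e_{1}^{T}$ and $e^{\nu}=1$ because $\exp(M)\in G=\Phi(\mathcal{G})$. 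This is a real gain rather than pedantry: the bare implication ``$e^{N}\in\Phi(GA(n,\mathbb{C}))$ implies $N-2i\pi k I_{n+1}\in\Psi(MA(n,\mathbb{C}))$ for some $k$'' is false without the structural hypothesis $N\in P\mathcal{K}_{\eta,r}(\mathbb{C})P^{-1}$ --- for instance $N=\left[\begin{smallmatrix}2i\pi & 1\\ 0 & 0\end{smallmatrix}\right]$ satisfies $e^{N}=I_{2}\in\Phi(GA(1,\mathbb{C}))$, yet no translate $N-2i\pi k I_{2}$ has zero first row --- so any correct argument must exploit that hypothesis exactly where your left-eigenvector computation does. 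In short, your proof follows the same route as the paper but is self-contained at the very point where the paper leans on a broken (or at best external) citation, at the cost of a few lines of elementary matrix computation.
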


\begin{proof} Let $N\in \mathrm{g}$. By Lemma ~\ref{L:21013}, $exp(N)\in G^{*}\subset \Phi(GA(n, \mathbb{C}))$.
Then by Lemma ~\ref{L:4}, there exists $k\in \mathbb{Z}$ such that
$N'=N-2ik\pi I_{n+1}\in\Psi(MA(n, \mathbb{C}))$. As
$e^{N'}=e^{N}\in G^{*}$ and $N'\in
P\mathcal{K}_{\eta,r}(\mathbb{C})P^{-1}$ then $N'\in
\mathrm{g}\cap \Psi(MA(n, \mathbb{C}))=\mathrm{g}^{1}$. Hence
$\mathrm{g}\subset \mathrm{g}^{1}+2i\pi\mathbb{Z}I_{n+1}$.
 Conversely, as $\mathrm{g}^{1}+2i\pi \mathbb{Z}I_{n+1}\subset P\mathcal{K}_{\eta,r}(\mathbb{C})P^{-1}$ and
 $exp(\mathrm{g}^{1}+2i\pi \mathbb{Z}I_{n+1})=exp(\mathrm{g}^{1})\subset G^{*}$, hence $\mathrm{g}^{1}+2i\pi\mathbb{Z}I_{n+1}\subset \mathrm{g}$.
\end{proof}
\medskip

\begin{cor}\label{r:1} We have $exp(\Psi(\mathfrak{q}))=\Phi(\mathcal{G}^{*})$.
\end{cor}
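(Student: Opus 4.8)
The plan is to assemble the corollary from facts already established, since $\exp(\Psi(\mathfrak{q}))$ equals $\exp(\mathrm{g}^{1})$: indeed $\mathfrak{q}=\Psi^{-1}(\mathrm{g}^{1})$ with $\mathrm{g}^{1}\subset\Psi(MA(n,\mathbb{C}))$ and $\Psi$ injective, so $\Psi(\mathfrak{q})=\mathrm{g}^{1}$. Thus the claim reduces to proving $\exp(\mathrm{g}^{1})=\Phi(\mathcal{G}^{*})$, and I would route this through $G^{*}$.

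First I would show $\exp(\mathrm{g}^{1})=\exp(\mathrm{g})$ using Corollary~\ref{C:21012}. For any $N\in\mathrm{g}^{1}$ and $k\in\mathbb{Z}$ the matrix $2i\pi k I_{n+1}$ is central, hence commutes with $N$, so $\exp(N+2i\pi k I_{n+1})=\exp(N)\exp(2i\pi k I_{n+1})=e^{2i\pi k}\exp(N)=\exp(N)$. Therefore $\exp(\mathrm{g})=\exp(\mathrm{g}^{1}+2i\pi\mathbb{Z}I_{n+1})=\exp(\mathrm{g}^{1})$, and Lemma~\ref{L:21013} gives $\exp(\mathrm{g}^{1})=\exp(\mathrm{g})=G^{*}$.

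It then remains to identify $G^{*}$ with $\Phi(\mathcal{G}^{*})$. Reading off the block form of $\Phi$, the matrix $\Phi(f)$ attached to $f=(A,a)$ has determinant $\det A$, so $\Phi(f)\in GL(n+1,\mathbb{C})$ precisely when $A\in GL(n,\mathbb{C})$, i.e.\ precisely when $f\in GA(n,\mathbb{C})$. Since $\Phi$ is injective and $G=\Phi(\mathcal{G})$, intersecting with $GL(n+1,\mathbb{C})$ selects exactly the images of the invertible members of $\mathcal{G}$, whence $G^{*}=\Phi(\mathcal{G}\cap GA(n,\mathbb{C}))=\Phi(\mathcal{G}^{*})$. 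Chaining $\exp(\Psi(\mathfrak{q}))=\exp(\mathrm{g}^{1})=G^{*}=\Phi(\mathcal{G}^{*})$ closes the argument. Each step is short; the only place demanding a little care is the determinant/invertibility correspondence for $\Phi$ in the last paragraph, but this is immediate from its block-triangular shape, so I anticipate no real obstacle — the corollary is essentially bookkeeping on top of Lemma~\ref{L:21013} and Corollary~\ref{C:21012}.
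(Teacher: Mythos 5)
Your proof is correct and follows essentially the same route as the paper's: both deduce $G^{*}=\exp(\mathrm{g})=\exp(\mathrm{g}^{1}+2i\pi\mathbb{Z}I_{n+1})=\exp(\mathrm{g}^{1})$ from Lemma~\ref{L:21013} and Corollary~\ref{C:21012}, then substitute $\Psi(\mathfrak{q})=\mathrm{g}^{1}$. The only difference is that you explicitly verify, via the determinant of the block form, the identification $G^{*}=\Phi(\mathcal{G}^{*})$, a step the paper's proof uses silently.
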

\medskip

\begin{proof}
By Lemmas ~\ref{L:21013} and ~\ref{C:21012}, We have
$G^{*}=exp(\mathrm{g})=exp(\mathrm{g}^{1}+2i\pi
\mathbb{Z}I_{n+1})=exp(\mathrm{g}^{1})$. Since
$\mathrm{g}^{1}=\Psi(\mathfrak{q})$, we get
$exp(\Psi(\mathfrak{q}))=\Phi(\mathcal{G}^{*})$.
\end{proof}
\medskip

\section{{\bf Proof of Theorem~\ref{T:00}}}

Let $\widetilde{G}$ be the semi-group generated by $G$ and
$\mathbb{C}I_{n+1}=\{\lambda I_{n+1}:\  \  \ \lambda\in \mathbb{C}
\}$. Then $\widetilde{G}$ is an abelian sub-semigroup of
$M_{n+1}(\mathbb{C})$. By Proposition~\ref{p:2}, there exists
$P\in\Phi(GA(n, \mathbb{C}))$ such that $P^{-1}GP$ is a subgroup
of $\mathcal{K}_{\eta,r}(\mathbb{C})$ for some $r\leq n+1$ and
$\eta=(n_{1},\dots,n_{r})\in\mathbb{N}_{0}^{r}$ and this also
implies that $P^{-1}\widetilde{G}P$ is a sub-semigroup of
$\mathcal{K}_{\eta,r}(\mathbb{C})$. Set
$\widetilde{\mathrm{g}}=exp^{-1}(\widetilde{G})\cap
(P\mathcal{K}_{\eta,r}(\mathbb{C})P^{-1})$ \ and \
 $\widetilde{\mathrm{g}}_{v_{0}}=\{Bv_{0}\ : \ B\in
\widetilde{\mathrm{g}}\}$.\ Then we have the following theorem,
applied to $\widetilde{G}$:
\medskip

\begin{thm}\label{T:5}$($\cite{aA-Hm12},\ Theorem 1.1$)$  Under the notations above, the following properties are equivalent:
\begin{itemize}
  \item [(i)] $\widetilde{G}$ has a dense orbit in
  $\mathbb{C}^{n+1}$.
  \item [(ii)] the orbit $O_{\widetilde{G}}(v_{0})$ is dense in
$\mathbb{C}^{n+1}$.
  \item [(iii)] $\widetilde{\mathrm{g}}_{v_{0}}$ is an additive subgroup dense in
$\mathbb{C}^{n+1}$.
\end{itemize}
\end{thm}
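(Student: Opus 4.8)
My plan is to establish the implication $(ii)\Rightarrow(i)$, the implication $(i)\Rightarrow(ii)$, and the equivalence $(ii)\Leftrightarrow(iii)$; together these give the stated three-fold equivalence. The two structural inputs I will lean on are, first, the exponential identification $\exp(\widetilde{\mathrm{g}})=\widetilde{G}^{*}$ (where $\widetilde{G}^{*}=\widetilde{G}\cap GL(n+1,\mathbb{C})$), the analogue of Lemma~\ref{L:21013} with $\widetilde{G}$ in place of $G$, which lets me write the invertible part of $O_{\widetilde{G}}(v_{0})$ as $\{\exp(B)v_{0}:B\in\widetilde{\mathrm{g}}\}$; and second, the inclusion $\mathbb{C}I_{n+1}\subset\widetilde{G}$, built into the definition of $\widetilde{G}$, which yields $\mathbb{C}I_{n+1}\subset\widetilde{\mathrm{g}}$ and hence stability of $\widetilde{\mathrm{g}}$ under the shifts $B\mapsto B+\lambda I_{n+1}$, $\lambda\in\mathbb{C}$.

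The implication $(ii)\Rightarrow(i)$ is immediate, a dense orbit of the distinguished point $v_{0}$ being in particular a dense orbit. For $(i)\Rightarrow(ii)$ I would show that the only obstructions to density are the $\widetilde{G}$-invariant hyperplanes $H_{k}$: if $O_{\widetilde{G}}(x)$ is dense then necessarily $x\in V=\prod_{k=1}^{r}\mathbb{C}^{*}\times\mathbb{C}^{n_{k}-1}$, since an orbit started on any $H_{k}$ stays in $H_{k}$ and cannot be dense. Using that each block $\mathbb{T}_{n_{k}}(\mathbb{C})$ of the normal form in Proposition~\ref{p:2} is lower triangular, so that a generic block element has $e_{k,1}$ as a cyclic vector, one checks that the density of $O_{\widetilde{G}}(x)$ depends only on the condition $x\in V$; as $v_{0}=Pu_{0}$ lies in $V$ (its leading coordinate in each block is nonzero), the orbit of $v_{0}$ is then dense as well.

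The core of the argument is $(ii)\Leftrightarrow(iii)$, where I pass between the multiplicative map $B\mapsto\exp(B)v_{0}$ and the additive map $B\mapsto Bv_{0}$. The scalar freedom links the two normalizations: since $B+\lambda I_{n+1}\in\widetilde{\mathrm{g}}$, the orbit contains the whole ray $\exp(B+\lambda I_{n+1})v_{0}=e^{\lambda}\exp(B)v_{0}$, whereas $\widetilde{\mathrm{g}}_{v_{0}}$ contains the whole translate $Bv_{0}+\mathbb{C}v_{0}$, in particular the line $\mathbb{C}v_{0}$. Writing $B=\lambda I_{n+1}+N$ with $N$ block-nilpotent and expanding $\exp(B)v_{0}=e^{\lambda}\sum_{k\geq0}\tfrac{1}{k!}N^{k}v_{0}$, I would identify the passage $Bv_{0}\mapsto\exp(B)v_{0}$ with a triangular polynomial transformation of the cyclic subspace generated by $v_{0}$ and verify that such a transformation preserves density of sets, thereby matching $\overline{O_{\widetilde{G}}(v_{0})}=\mathbb{C}^{n+1}$ with $\overline{\widetilde{\mathrm{g}}_{v_{0}}}=\mathbb{C}^{n+1}$. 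Along the way I must also confirm that $\widetilde{\mathrm{g}}_{v_{0}}$ is an additive \emph{subgroup} rather than merely a sub-semigroup; this I would obtain from the structural description $\widetilde{\mathrm{g}}=\widetilde{\mathrm{g}}^{1}+\mathbb{C}I_{n+1}$ furnished by the analogue of Corollary~\ref{C:21012}, the scalar line together with the $2i\pi\mathbb{Z}$-freedom in the logarithm supplying the inverses that a bare semigroup would lack.

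I expect the main obstacle to be precisely this nonlinear transfer through the exponential in $(ii)\Leftrightarrow(iii)$. Because $\exp(B)v_{0}$ involves every iterate $N^{k}v_{0}$ and not merely $Nv_{0}$, the comparison between the two density statements is not a single linear isomorphism but a block-by-block triangular polynomial change of coordinates, whose invertibility and preservation of dense subsets must be justified uniformly, all while the additive line $\mathbb{C}v_{0}$ from $(iii)$ is reconciled with the multiplicative ray $\mathbb{C}^{*}\exp(B)v_{0}$ from $(ii)$. Together with the verification that $\widetilde{\mathrm{g}}_{v_{0}}$ is genuinely a group, this matching of normalizations is the technical heart of the proof.
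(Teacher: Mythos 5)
First, a point of reference: the paper does not prove this statement at all. Theorem~\ref{T:5} is imported verbatim from \cite{aA-Hm12} (Theorem 1.1 there), applied to the semigroup $\widetilde{G}$, so there is no internal proof to compare yours against; your attempt must stand on its own as a reconstruction of that external theorem. It does not. Your skeleton is the right one ($(ii)\Rightarrow(i)$ trivial, the work concentrated in $(i)\Rightarrow(ii)$ and $(ii)\Leftrightarrow(iii)$), but both hard implications are asserted rather than proved. In $(i)\Rightarrow(ii)$, after correctly observing that a dense orbit must start in $V$ (the complement $\bigcup_{k}H_{k}$ consists of invariant proper subspaces), you claim that ``density of $O_{\widetilde{G}}(x)$ depends only on the condition $x\in V$.'' That claim is precisely the minimality theorem of \cite{aA-Hm12} (recalled in this paper as Lemma~\ref{KKHJ:01}: every orbit contained in $V'$ is minimal in $V'$, and one dense orbit forces all orbits of points of $V'$ to be dense). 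Cyclicity of $e_{k,1}$ for a single triangular block does not yield it; nothing in your sketch explains why two different points of $V$ have orbit closures of the same size under a fixed abelian semigroup.

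The more serious gap is in $(ii)\Leftrightarrow(iii)$. You propose to realize $Bv_{0}\mapsto\exp(B)v_{0}$ as a ``triangular polynomial transformation'' of the cyclic subspace and transfer density through it. No such transformation exists, because $\exp(B)v_{0}$ is not a function of $Bv_{0}$. Already in $\mathbb{T}_{3}(\mathbb{C})$: for $N$ strictly lower triangular with entries $a_{21}=a$, $a_{31}=b$, $a_{32}=c$, one has $Ne_{1}=(0,a,b)$ but
\[
\exp(N)e_{1}=\left(1,\ a,\ b+\tfrac{1}{2}ca\right),
\]
which depends on $c$. So the assignment is ill defined as a map of $\mathbb{C}^{n+1}$. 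It does become well defined on the set $\widetilde{\mathrm{g}}_{v_{0}}$ itself, but only by invoking commutativity: if $Bv_{0}=B'v_{0}$ and $[B,B']=0$, then $\exp(B')v_{0}=\exp(B)\exp(B'-B)v_{0}=\exp(B)v_{0}$, since $(B'-B)^{k}v_{0}=0$ for all $k\geq 1$. Even granting this, what you obtain is a map defined only on the orbit set, with no continuity, openness, or extension properties established, and ``preservation of density'' in both directions is exactly the nontrivial content of the cited theorem — your outline presupposes it rather than supplies it. Two further points are glossed over: $\widetilde{\mathrm{g}}_{v_{0}}$ being an additive \emph{group} does not follow formally from $\widetilde{\mathrm{g}}=\mathrm{g}^{1}+\mathbb{C}I_{n+1}$ alone (the paper gets it from Lemma~\ref{LL:002}, itself a cited result), and since $\widetilde{G}$ is only a semigroup it may contain non-invertible elements, so relating the full orbit $O_{\widetilde{G}}(v_{0})$ to the invertible part $\widetilde{G}^{*}v_{0}=\exp(\widetilde{\mathrm{g}})v_{0}$ requires the analogue of Lemma~\ref{KjkJ:01}, which your argument never addresses.
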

\medskip

\begin{lem}\label{LL:002}$($\cite{aAh-M05},\ Lemma 4.1$)$ The sets $\mathrm{g}$ and \ $\widetilde{\mathrm{g}}$ are additive subgroups of
$M_{n+1}(\mathbb{C})$. In particular, $\mathrm{g}_{v_{0}}$ and
$\widetilde{\mathrm{g}}_{v_{0}}$ are additive sub-semigroups of
$\mathbb{C}^{n+1}$.
\end{lem}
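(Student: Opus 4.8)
The plan is to check the additive-group axioms for $\mathrm{g}$ (and then for $\widetilde{\mathrm{g}}$), the only substantial point being closure under addition. Indeed $\mathrm{g}\subset P\mathcal{K}_{\eta,r}(\mathbb{C})P^{-1}$, which is a $\mathbb{C}$-vector space and hence already closed under sums and negatives, and $0\in\mathrm{g}$ since $\exp(0)=I_{n+1}$ lies in $G$. So the whole matter reduces to the following assertion about the exponential: if $A,B\in\mathcal{K}_{\eta,r}(\mathbb{C})$ and $\exp(A)$ commutes with $\exp(B)$, then $A$ and $B$ commute. Granting this, for $A,B\in\mathrm{g}$ I conjugate by $P$ so as to assume $G\subset\mathcal{K}^{*}_{\eta,r}(\mathbb{C})$ and $A,B\in\mathcal{K}_{\eta,r}(\mathbb{C})$; since $G$ is abelian, $\exp(A),\exp(B)\in G$ commute, the assertion yields $AB=BA$, and therefore $\exp(A+B)=\exp(A)\exp(B)\in G$ (using that $G$ is a multiplicative semigroup) while $A+B\in\mathcal{K}_{\eta,r}(\mathbb{C})$. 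Thus $A+B\in\mathrm{g}$.

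To prove the commuting-logarithm assertion I would argue blockwise. Writing $A=\bigoplus_{k=1}^{r}A_{k}$ and $B=\bigoplus_{k=1}^{r}B_{k}$ with $A_{k},B_{k}\in\mathbb{T}_{n_{k}}(\mathbb{C})$, the map $\exp$ acts block by block and two block-diagonal matrices commute iff they do so in each block, so it suffices to work in a single block $\mathbb{T}_{n_{k}}(\mathbb{C})$. There $A_{k}=\mu_{k}I+N_{k}$ with $N_{k}$ strictly lower triangular, hence nilpotent, so $\exp(A_{k})=e^{\mu_{k}}U_{k}$ with $U_{k}=\exp(N_{k})$ unipotent; likewise $\exp(B_{k})=e^{\nu_{k}}V_{k}$. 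The scalar factors commute with everything, so $[\exp(A_{k}),\exp(B_{k})]=0$ forces $[U_{k},V_{k}]=0$. Now the crucial observation: because $N_{k}$ is nilpotent, $N_{k}=\log U_{k}=\sum_{j\geq 1}\frac{(-1)^{j+1}}{j}(U_{k}-I)^{j}$ is a \emph{finite} sum, i.e. a polynomial in $U_{k}$, and similarly $M_{k}$ is a polynomial in $V_{k}$. Commuting matrices have commuting polynomials, so $[U_{k},V_{k}]=0$ gives $[N_{k},M_{k}]=0$ and hence $[A_{k},B_{k}]=[\mu_{k}I+N_{k},\,\nu_{k}I+M_{k}]=0$. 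Reassembling the blocks gives $[A,B]=0$, as needed.

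This settles that $\mathrm{g}$ is closed under addition and contains $0$, which is exactly the property invoked later (for instance in Corollary~\ref{C:21012} and in the ``in particular'' clause). The remaining negation axiom is where the group structure enters: when the invertible part is a group one has $\exp(-A)=\exp(A)^{-1}\in G$, giving $-A\in\mathrm{g}$, and this is the situation of the source \cite{aAh-M05}. The argument for $\widetilde{\mathrm{g}}$ is identical, carried out with $\widetilde{G}$ in place of $G$: Proposition~\ref{p:2} places $P^{-1}\widetilde{G}P$ in $\mathcal{K}_{\eta,r}(\mathbb{C})$, abelianity of $\widetilde{G}$ supplies the commuting exponentials, and $\mathbb{C}I_{n+1}\subset\widetilde{G}$ gives $0\in\widetilde{\mathrm{g}}$ at once. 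The final ``in particular'' is then immediate: for $B,B'\in\mathrm{g}$ we have $Bv_{0}+B'v_{0}=(B+B')v_{0}\in\mathrm{g}_{v_{0}}$, so $\mathrm{g}_{v_{0}}$ and, likewise, $\widetilde{\mathrm{g}}_{v_{0}}$ are closed under addition.

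The step I expect to be the main obstacle is the commuting-logarithm assertion. The difficulty is that $\exp$ is badly non-injective on $\mathcal{K}_{\eta,r}(\mathbb{C})$---each diagonal value is determined only modulo $2\pi i\mathbb{Z}$---so one cannot naively ``take logarithms'' of the commutation relation. The resolution is precisely the separation used above: the non-injectivity lives entirely in the scalar (diagonal) parts, which commute trivially, whereas on the unipotent parts $\exp$ is a polynomial bijection onto the unipotents, so the commutation descends from $U_{k},V_{k}$ to $N_{k},M_{k}$. The block-diagonal form of $\mathcal{K}_{\eta,r}(\mathbb{C})$ and the nilpotence of the strictly-lower-triangular parts are what make this clean splitting available.
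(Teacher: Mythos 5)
Your argument is essentially correct, but note that the paper itself offers no proof of this statement at all: Lemma~\ref{LL:002} is imported verbatim from \cite{aAh-M05} (Lemma 4.1), where $G$ is an abelian \emph{subgroup} of the invertible matrices, and the paper simply transplants it to the semigroup setting by citation. Your self-contained proof therefore does genuinely more than the paper, and its key step --- if $A,B\in\mathcal{K}_{\eta,r}(\mathbb{C})$ have commuting exponentials then $A$ and $B$ commute, proved blockwise by splitting off the scalar $\mu_{k}I$ and using that $\log$ is a polynomial on unipotent matrices --- is precisely the mechanism that makes the cited lemma true: the constant diagonal in each block $\mathbb{T}_{n_{k}}(\mathbb{C})$ is what excludes the classical counterexamples to ``commuting exponentials imply commuting logarithms,'' where eigenvalues differ by multiples of $2\pi i$, and your final paragraph identifies this correctly. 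Your closure-under-addition argument ($\exp(A+B)=\exp(A)\exp(B)\in G$ by commutativity and the multiplicative semigroup property, $A+B\in P\mathcal{K}_{\eta,r}(\mathbb{C})P^{-1}$ by linearity) is complete, and closure under addition is in fact the only consequence of this lemma the paper ever uses (in Corollary~\ref{C:21012}, Lemma~\ref{L:01234} and Lemma~\ref{LL0L:1}). Two caveats on the group axioms, which you partially address. First, your assertion that $0\in\mathrm{g}$ because $I_{n+1}\in G$ is unjustified under the paper's hypotheses: $\mathcal{G}$ is only a sub-semigroup of $MA(n,\mathbb{C})$ and need not contain the identity map; the assertion is sound for $\widetilde{\mathrm{g}}$, as you note, since $I_{n+1}\in\mathbb{C}I_{n+1}\subset\widetilde{G}$. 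Second, as you correctly flag, the negation axiom requires $\exp(A)^{-1}\in G$, which is available in the group setting of \cite{aAh-M05} but not for a general semigroup. So the honest conclusion, which your proof effectively establishes, is that in this paper's setting $\mathrm{g}$ and $\widetilde{\mathrm{g}}$ are additive sub-semigroups (subgroups when the invertible parts form groups); the lemma as restated by the paper is over-claimed, but nothing downstream is affected, and your proof supplies the missing mathematics that the paper leaves to an external reference proved under different hypotheses.
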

\medskip

Recall that $\mathrm{g}^{1}=\mathrm{g}\cap \Psi(MA(n,
\mathbb{C}))$ and $\mathfrak{q}=\Psi^{-1}(\mathrm{g}^{1})\subset
MA(n, \mathbb{C})$.
\medskip

\begin{lem}\label{L:01234} Under the notations above, one has:\
\begin{itemize}
  \item [(i)] $\widetilde{\mathrm{g}}=\mathrm{g}^{1}+\mathbb{C}I_{n+1}$.\
  \item [(ii)] $\{0\}\times\mathfrak{q}_{w_{0}}=\mathrm{g}^{1}_{v_{0}}$.
 \end{itemize}
\end{lem}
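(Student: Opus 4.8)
The plan is to unwind the definitions on both sides and to exploit the direct-sum structure recorded in the composition formula for $\Phi$ together with the block shapes of $\Psi$ and $\exp$. Recall that every element of $\mathrm{g}^{1}=\Psi(\mathfrak q)$ has the block form $\begin{bmatrix}0&0\\ a&A\end{bmatrix}$, while elements of $\mathbb{C}I_{n+1}$ are $\lambda I_{n+1}=\begin{bmatrix}\lambda&0\\0&\lambda I_n\end{bmatrix}$. The two goals are then: in (i) to identify the Lie-algebra $\widetilde{\mathrm g}$ of the enlarged semigroup $\widetilde G$ with the sum of these two pieces; and in (ii) to compute the orbit vector $\mathrm g^{1}_{v_0}=\{Bv_0:B\in\mathrm g^{1}\}$ explicitly and match it against $\{0\}\times\mathfrak q_{w_0}$.

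For part (i), I would argue by double inclusion. Recall $\mathrm g=\mathrm g^{1}+2i\pi\mathbb Z I_{n+1}$ from Corollary~\ref{C:21012}, so in particular $\mathrm g\subset\mathrm g^{1}+\mathbb C I_{n+1}$. Since $\widetilde G$ is generated by $G$ together with $\mathbb C I_{n+1}$, and $\exp$ carries the additive structure of $\widetilde{\mathrm g}$ to the multiplicative structure of $\widetilde G$ (via Lemma~\ref{L:21013} applied to $\widetilde G$), scalars $\lambda I_{n+1}=\exp(\log\lambda\, I_{n+1})$ with $\log\lambda\, I_{n+1}\in\mathbb C I_{n+1}$ sit inside $\widetilde{\mathrm g}$; combined with $\mathrm g\subset\widetilde{\mathrm g}$ this gives $\mathrm g^{1}+\mathbb C I_{n+1}\subset\widetilde{\mathrm g}$. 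For the reverse inclusion, any $N\in\widetilde{\mathrm g}$ lies in $P\mathcal K_{\eta,r}(\mathbb C)P^{-1}$ and has $\exp(N)\in\widetilde G^{*}$; peeling off the scalar component $\tfrac{1}{n+1}\mathrm{tr}(N)\,I_{n+1}$ (or matching against the diagonal entries of the block form) leaves a matrix in $\Psi(MA(n,\mathbb C))\cap\mathrm g=\mathrm g^{1}$, so $N\in\mathrm g^{1}+\mathbb C I_{n+1}$.

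For part (ii), the computation is direct: writing $B\in\mathrm g^{1}$ as $\Psi(f)=\begin{bmatrix}0&0\\ a&A\end{bmatrix}$ with $f=(A,a)\in\mathfrak q$, and $v_0=(1,w_0)^{T}$ with $w_0=p_2(v_0)\in\mathbb C^{n}$, the block multiplication gives
\[
Bv_0=\begin{bmatrix}0&0\\ a&A\end{bmatrix}\begin{bmatrix}1\\ w_0\end{bmatrix}=\begin{bmatrix}0\\ Aw_0+a\end{bmatrix}=\begin{bmatrix}0\\ f(w_0)\end{bmatrix}.
\]
As $B$ ranges over $\mathrm g^{1}$, equivalently $f$ ranges over $\mathfrak q$, so the second coordinate $f(w_0)$ ranges over $\mathfrak q_{w_0}=\{f(w_0):f\in\mathfrak q\}$; hence $\mathrm g^{1}_{v_0}=\{0\}\times\mathfrak q_{w_0}$, which is exactly the claimed identity.

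I expect part (i) to be the main obstacle, since part (ii) is a one-line block computation once the definitions are in hand. The delicate point in (i) is the reverse inclusion: one must be sure that subtracting the scalar multiple of $I_{n+1}$ that kills the $(1,1)$-entry lands one back inside $\mathrm g$ (not merely inside $\Psi(MA(n,\mathbb C))$), so that the intersection $\mathrm g\cap\Psi(MA(n,\mathbb C))=\mathrm g^{1}$ can be invoked. This is precisely where the fact that $\widetilde{\mathrm g}$ is an additive group (Lemma~\ref{LL:002}) and that it contains $\mathbb C I_{n+1}$ is used, so that the decomposition $N=(N-\lambda I_{n+1})+\lambda I_{n+1}$ respects membership in $\widetilde{\mathrm g}$; I would verify that $N-\lambda I_{n+1}$ indeed lies in $\mathrm g$ by checking it exponentiates into $G^{*}$ and lies in $P\mathcal K_{\eta,r}(\mathbb C)P^{-1}$, mirroring the argument already used in the proof of Corollary~\ref{C:21012}.
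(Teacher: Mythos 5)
Your part (ii) is exactly the paper's argument: the same block computation $\Psi(f)v_{0}=(0,\,f(w_{0}))$ for $f\in\mathfrak{q}$, so nothing to add there. Your part (i) also follows the paper's skeleton (double inclusion, with the inclusion $\mathrm{g}^{1}+\mathbb{C}I_{n+1}\subset\widetilde{\mathrm{g}}$ coming from $\mathrm{g}^{1}\subset\widetilde{\mathrm{g}}$, $\mathbb{C}I_{n+1}\subset\widetilde{\mathrm{g}}$ and the additive group structure of Lemma~\ref{LL:002}), but your hard inclusion is organized differently. The paper starts from the multiplicative factorization $e^{B}=\lambda A$ with $\lambda\in\mathbb{C}^{*}$, $A\in G$, picks $\mu$ with $e^{\mu}=\lambda$, so that $B-\mu I_{n+1}\in\mathrm{g}$ is automatic (its exponential is $A\in G$), and only then invokes Corollary~\ref{C:21012} to correct by $2ik\pi I_{n+1}$ and land in $\mathrm{g}^{1}$. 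You instead choose the scalar from the matrix itself, so that membership in $\Psi(MA(n,\mathbb{C}))$ is automatic and membership in $\mathrm{g}$ is what must be checked; this is a legitimate one-step variant that makes Corollary~\ref{C:21012} unnecessary in this direction, provided the deferred check is actually carried out.

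Two concrete points about that deferred check. First, your parenthetical option of subtracting $\frac{1}{n+1}\mathrm{tr}(N)\,I_{n+1}$ is wrong: it does not annihilate the $(1,1)$-entry in general (take $N=\Psi(f)\in\mathrm{g}^{1}$ with $f=(A,a)$ and $\mathrm{tr}(A)\neq 0$; then $N-\frac{\mathrm{tr}(N)}{n+1}I_{n+1}\notin\Psi(MA(n,\mathbb{C}))$), so only the choice $\lambda=N_{1,1}$ works. Second, with that choice the missing verification is the real content of the proof, and it is closable as follows: any $N\in\widetilde{\mathrm{g}}\subset P\mathcal{K}_{\eta,r}(\mathbb{C})P^{-1}$ has first row $(N_{1,1},0,\dots,0)$, hence $N^{k}$ has first row $(N_{1,1}^{k},0,\dots,0)$ and $(e^{N})_{1,1}=e^{N_{1,1}}$; writing $e^{N}=cA$ with $c\in\mathbb{C}^{*}$, $A\in G$ (possible since $e^{N}\in\widetilde{G}$ is invertible) and comparing $(1,1)$-entries gives $c=e^{N_{1,1}}$, so $e^{N-\lambda I_{n+1}}=c^{-1}e^{N}=A\in G$, whence $N-\lambda I_{n+1}\in\exp^{-1}(G)\cap P\mathcal{K}_{\eta,r}(\mathbb{C})P^{-1}=\mathrm{g}$, and having zero first row it lies in $\mathrm{g}\cap\Psi(MA(n,\mathbb{C}))=\mathrm{g}^{1}$. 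Note that merely knowing $N-\lambda I_{n+1}\in\widetilde{\mathrm{g}}$ (which is all the additive group structure gives you) is not enough; the first-row computation above is exactly the ingredient that the paper's route packages inside its factorization step and Corollary~\ref{C:21012}.
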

\medskip

\begin{proof} $(i)$ Let $B\in \widetilde{\mathrm{g}}$, then $e^{B}\in \widetilde{G}$. One can write $e^{B}=\lambda A$ for some
 $\lambda\in \mathbb{C}^{*}$ and $A\in G$. Let $\mu\in \mathbb{C}$ such that $e^{\mu}=\lambda$, then $e^{B-\mu I_{n+1}}=A$.
 Since $B-\mu I_{n+1}\in
  P\mathcal{K}_{\eta,r}(\mathbb{C})P^{-1}$, so $B-\mu I_{n+1}\in exp^{-1}(G)\cap P\mathcal{K}_{\eta,r}(\mathbb{C})P^{-1}=\mathrm{g}$.
   By Corollary~\ref{C:21012}, there exists $k\in \mathbb{Z}$ such that
   $B':=B-\mu I_{n+1}+2ik\pi I_{n+1}\in \mathrm{g}^{1}$. Then
  $B\in \mathrm{g}^{1}+\mathbb{C}I_{n+1}$ and hence
   $\widetilde{\mathrm{g}}\subset \mathrm{g}^{1}+\mathbb{C}I_{n+1}$. Since $\mathrm{g}^{1}\subset \widetilde{\mathrm{g}}$
    and   $\mathbb{C}I_{n+1}\subset \widetilde{\mathrm{g}}$, it follows that
    $\mathrm{g}^{1}+\mathbb{C}I_{n+1} \subset \widetilde{\mathrm{g}}$ (since $\widetilde{\mathrm{g}}$
    is an additive group, by Lemma ~\ref{LL:002}). This proves (i).
\
\\
$(ii)$ Since $\Psi(\mathfrak{q})=\mathrm{g}^{1}$ and $v_{0}=(1,
w_{0})$, we obtain for every
 $f=(B,b)\in \mathfrak{q}$, \begin{align*}\Psi(f)v_{0}& =\left[\begin{array}{cc}
                                                                          0 & 0 \\
                                                                          b & B
                                                                        \end{array}
\right]\left[\begin{array}{c}
               1 \\
               w_{0}
             \end{array}
\right]\\ \ & =\left[\begin{array}{c}
               0\\
               b+Bw_{0}
             \end{array}
\right]\\ \ & =\left[\begin{array}{c}
               0\\
               f(w_{0})
             \end{array}
\right].
\end{align*}
Hence $\mathrm{g}^{1}_{v_{0}}=\{0\}\times\mathfrak{q}_{w_{0}}$.
  \end{proof}
  \medskip

  \begin{lem}\label{Lkl:01} Under notations above, we have:\
   i) $\{0\}\times \mathbb{C}^{n}\oplus \mathbb{C}v_{0}=\mathbb{C}^{n+1}$.
   \ \\ ii)
   $p_{1}(\widetilde{\mathrm{g}}_{v_{0}})=\mathfrak{q}_{w_{0}}$,
   where $p_{1}: \ \{0\}\times \mathbb{C}^{n}\oplus
   \mathbb{C}v_{0}\longrightarrow\{0\}\times \mathbb{C}^{n}$
   setting by $p_{1}(v+\lambda v_{0})=v$ for every $v\in \{0\}\times \mathbb{C}^{n}\oplus
   \mathbb{C}v_{0}$.
   \end{lem}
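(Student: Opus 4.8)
The plan is to handle the two parts separately, using the direct-sum structure established in (i) to give meaning to the projection $p_1$ appearing in (ii), and then invoking Lemma~\ref{L:01234} to identify the image.

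For (i), I would observe that $v_0 = (1, w_0)$ has first coordinate equal to $1$, so $v_0 \notin \{0\}\times\mathbb{C}^n$; consequently $\mathbb{C}v_0 \cap (\{0\}\times\mathbb{C}^n) = \{0\}$ and the sum is direct. To see that it exhausts $\mathbb{C}^{n+1}$, I would write an arbitrary $(x_1, x') \in \mathbb{C}\times\mathbb{C}^n$ as $x_1 v_0 + (0,\, x' - x_1 w_0)$, the first summand lying in $\mathbb{C}v_0$ and the second in $\{0\}\times\mathbb{C}^n$. This simultaneously proves the equality and makes explicit the decomposition against which $p_1$ is defined.

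For (ii), the key input is Lemma~\ref{L:01234}(i), namely $\widetilde{\mathrm{g}} = \mathrm{g}^{1} + \mathbb{C}I_{n+1}$. Given $B \in \widetilde{\mathrm{g}}$, I would write $B = B' + \mu I_{n+1}$ with $B' \in \mathrm{g}^{1}$ and $\mu \in \mathbb{C}$, so that $Bv_0 = B'v_0 + \mu v_0$. By Lemma~\ref{L:01234}(ii), $B'v_0 \in \mathrm{g}^{1}_{v_0} = \{0\}\times\mathfrak{q}_{w_0} \subset \{0\}\times\mathbb{C}^n$, so the expression $Bv_0 = B'v_0 + \mu v_0$ is precisely the direct-sum decomposition from part (i). Applying $p_1$ then yields $p_1(Bv_0) = B'v_0 \in \{0\}\times\mathfrak{q}_{w_0}$, giving the inclusion $p_1(\widetilde{\mathrm{g}}_{v_0}) \subseteq \{0\}\times\mathfrak{q}_{w_0}$.

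For the reverse inclusion I would use $\mathrm{g}^{1} \subset \widetilde{\mathrm{g}}$, which follows from $G \subset \widetilde{G}$ together with the definitions of $\mathrm{g}$ and $\widetilde{\mathrm{g}}$. Any element of $\{0\}\times\mathfrak{q}_{w_0} = \mathrm{g}^{1}_{v_0}$ has the form $B'v_0$ with $B' \in \mathrm{g}^{1} \subset \widetilde{\mathrm{g}}$; since $B'v_0$ already lies in $\{0\}\times\mathbb{C}^n$, taking $\mu = 0$ gives $p_1(B'v_0) = B'v_0$. Hence $\{0\}\times\mathfrak{q}_{w_0} \subseteq p_1(\widetilde{\mathrm{g}}_{v_0})$, and under the canonical identification of $\{0\}\times\mathbb{C}^n$ with $\mathbb{C}^n$ this is exactly $\mathfrak{q}_{w_0}$. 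I do not anticipate a genuine obstacle: the only point requiring care is verifying that the algebraic splitting $B'v_0 + \mu v_0$ coincides with the geometric direct-sum decomposition of (i), which is precisely what Lemma~\ref{L:01234}(ii) guarantees.
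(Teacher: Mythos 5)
Your proof is correct and follows essentially the same route as the paper: part (i) from the explicit form $v_{0}=(1,w_{0})$, and part (ii) by combining Lemma~\ref{L:01234}(i) (the decomposition $\widetilde{\mathrm{g}}=\mathrm{g}^{1}+\mathbb{C}I_{n+1}$) with Lemma~\ref{L:01234}(ii) (the identification $\mathrm{g}^{1}_{v_{0}}=\{0\}\times\mathfrak{q}_{w_{0}}$). The only difference is that you spell out the details (the direct-sum decomposition, both inclusions, and the inclusion $\mathrm{g}^{1}\subset\widetilde{\mathrm{g}}$) which the paper's two-line proof leaves implicit.
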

   \medskip

   \begin{proof} The prrof of i) is obvious since
   $v_{0}=(1,w_{0})$. \ \\
   The proof of ii): By Lemma~\ref{L:01234}, ii), we have
   $\mathrm{g}^{1}_{v_{0}}=\{0\}\times\mathfrak{q}_{w_{0}}\subset \{0\}\times
   \mathbb{C}^{n}$. The proof follows then  by Lemma~\ref{L:01234}, i),
   because $\widetilde{\mathrm{g}}=\mathrm{g}^{1}+\mathbb{C}I_{n+1}$.
   \end{proof}
   \medskip

Denote by $V'=P(V)$ and $U=p_{1}(V)$. For such choise of the
matrix $P\in \Phi(GA(n+1, \C))$, we can write
$$P=\left[\begin{array}{cc}
  1 & 0 \\
  d & Q
\end{array}\right],\ \ \mathrm{with}\ \ \ Q\in GL(n, \C).$$  Set $h=(Q,d)\in GA(n, \C)$,
 $U'=h^{-1}(U)$ and $G^{*}=G\cap GL(n+1,\C)$. We have $G^{*}$ is an abelian semigroup of $GL(n+1,\C)$. Then:
\medskip

\begin{lem}\label{KKHJ:01}\cite{aA-Hm12} Under notations above, we have: \ \\ (i) every
orbit of $G^{*}$ and contained in $V'$ is minimal in it.\ \\ (ii)
if $G^{*}$ has a dense orbit in $V'$ then every orbit of $V'$ is
dense in $\mathbb{C}^{n+1}$.
   \end{lem}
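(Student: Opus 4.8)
The plan is to prove the two assertions of Lemma~\ref{KKHJ:01} by transporting everything to the normal form, where $G^{*}$ lies inside $\mathcal{K}^{*}_{\eta,r}(\mathbb{C})$, and then exploiting the block–triangular structure of $\mathbb{T}^{*}_{m}(\mathbb{C})$. Recall that $V=\prod_{k=1}^{r}\mathbb{C}^{*}\times\mathbb{C}^{n_{k}-1}$ is precisely the set of vectors whose $k$-th ``leading'' coordinate is nonzero in each block, and $V'=P(V)$; since $P\in\Phi(GA(n,\mathbb{C}))$ conjugation by $P$ is a homeomorphism of $\mathbb{C}^{n+1}$ carrying $G^{*}$-orbits to $G'^{*}$-orbits, it suffices to work with $G'^{*}=P^{-1}G^{*}P\subset\mathcal{K}^{*}_{\eta,r}(\mathbb{C})$ acting on $V$. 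The essential observation is that each matrix $A\in\mathcal{K}^{*}_{\eta,r}(\mathbb{C})$ is block lower-triangular with an invertible scalar $\mu_k\neq 0$ on the diagonal of the $k$-th block, so its action preserves each ``stratum'' $V$ and cannot send a point of $V$ into one of the invariant hyperplanes $H_{k}$.

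For part (i), I would argue that any orbit $O_{G'^{*}}(u)$ with $u\in V$ is minimal, meaning every point of its closure (intersected with $V$, or more precisely every point in the orbit closure that lies in $V$) has a dense orbit too. The key step is to show that the orbit map is, on $V$, ``almost'' a homogeneous action: because the leading coordinate in each block is multiplied by the nonzero scalar $\mu_k$ and these scalars form a sub-semigroup, I would identify the closure of the orbit with a set invariant under the natural semigroup of linear actions, and then invoke the standard fact that for abelian linear semigroups acting on the open dense stratum, orbit closures are homogeneous under the group generated. Concretely, if $v\in\overline{O_{G'^{*}}(u)}\cap V$, I want to produce, for any $A\in G'^{*}$, a sequence in $G'^{*}$ with $B_m v\to Au$, which follows from continuity and the group-closure structure established for the ambient linear semigroup in the cited work \cite{aA-Hm12}. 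Thus the orbit of $v$ is dense in $\overline{O_{G'^{*}}(u)}$, giving minimality.

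For part (ii), suppose $G'^{*}$ has a dense orbit in $V$, say $\overline{O_{G'^{*}}(u)}\supset V$. Since $V$ is dense in $\mathbb{C}^{n+1}$ (its complement $\bigcup_k H_k$ is a finite union of proper vector subspaces, each of dimension $n-1$, hence nowhere dense), we immediately get $\overline{O_{G'^{*}}(u)}=\mathbb{C}^{n+1}$, so $u$ itself has a dense orbit in all of $\mathbb{C}^{n+1}$. The remaining work is to upgrade this from the single point $u$ to every point of $V$: by part (i) every orbit meeting $V$ and contained in the orbit closure is minimal, and since $\overline{O_{G'^{*}}(u)}=\mathbb{C}^{n+1}$ contains every point of $V$, minimality forces $\overline{O_{G'^{*}}(v)}=\overline{O_{G'^{*}}(u)}=\mathbb{C}^{n+1}$ for each $v\in V$. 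Transporting back through $P$ yields the statement for $V'$ and $G^{*}$.

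The main obstacle I anticipate is making precise the minimality claim in part (i), i.e.\ establishing that the orbit closure in $V$ of an abelian block-triangular semigroup is itself an orbit closure of each of its points lying in $V$. This is where I would lean most heavily on the structural results of \cite{aA-Hm12} (in particular the correspondence between $\mathrm{g}^{*}$ and $G^{*}$ via the exponential, and the density criterion of Theorem~\ref{T:5}), since the homogeneity of orbit closures under the generated group is not automatic for semigroups and must be extracted from the additive-subgroup description of $\mathrm{g}=\exp^{-1}(G^{*})$ together with the $L$-invariance of the strata $H_k$. Once that homogeneity is in hand, the density transfer in part (ii) is essentially topological and routine.
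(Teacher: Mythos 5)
First, a point of comparison: the paper contains no proof of this lemma at all --- it is imported verbatim from \cite{aA-Hm12} (and in fact is never invoked in the proofs of Theorems~\ref{T:00} and~\ref{T:1}), so there is no internal argument to measure your attempt against. Judged as a standalone proof, your proposal has a genuine gap exactly where you yourself flag the ``main obstacle'': part (i) is never proved. The reductions you do carry out are fine --- conjugating by $P$ is legitimate since $P$ is a homeomorphism carrying $G^{*}$-orbits in $V'$ to $P^{-1}G^{*}P$-orbits in $V$, the set $V$ is open, dense and invariant, and your derivation of (ii) from (i) is correct --- but these are the routine steps. The entire weight of the lemma rests on the ``standard fact that for abelian linear semigroups acting on the open dense stratum, orbit closures are homogeneous under the group generated,'' and that fact is not standard: it is false as stated. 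Take $n=1$ and $\mathcal{G}$ generated by the single map $f(x)=2x$, so $G^{*}=\{\mathrm{diag}(1,2^{k}):k\geq 1\}$ is already in normal form ($r=2$, $\eta=(1,1)$, $P=I_{2}$, $V'=V=\mathbb{C}^{*}\times\mathbb{C}^{*}$). The orbit of $u=(1,1)$ is $E=\{(1,2^{k}):k\geq 1\}\subset V$, closed in $V$; the point $v=(1,2)\in E$ has orbit closure $\{(1,2^{k}):k\geq 2\}$, a proper nonempty closed invariant subset of $E$ not containing $u$. So $E$ is neither homogeneous under the generated group $\{\mathrm{diag}(1,2^{k}):k\in\mathbb{Z}\}$ nor minimal in the naive sense. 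This shows that no soft continuity argument of the kind you sketch (``produce $B_{m}v\to Au$ by continuity and group-closure structure'') can work: for semigroups, unlike groups, one cannot pass from $A_{m}u\to v$ to any approximation of $u$ from $v$, and whatever precise notion of minimality and whatever additional structure (the additive group $\mathrm{g}=\exp^{-1}(G^{*})$, or the adjunction of the scalars $\mathbb{C}I_{n+1}$ as in $\widetilde{G}$) makes the statement of \cite{aA-Hm12} correct, it must enter the argument explicitly; your proposal never engages with it.

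There is also a structural problem: at the one step that carries content, you defer to ``the structural results of \cite{aA-Hm12}'' --- but that is precisely the reference from which the paper quotes the entire lemma. As an independent proof the proposal is therefore circular: it reproduces the paper's citation, dressed as an argument, rather than replacing it. A genuine proof would have to start from the exponential correspondence $\exp(\mathrm{g})=G^{*}$ and the additive-subgroup structure of $\mathrm{g}$ (Lemmas~\ref{L:21013} and~\ref{LL:002}), show that for $u\in V$ the orbit closure $\overline{O_{G^{*}}(u)}\cap V$ is the image under $B\mapsto e^{B}u$ of a closed coset-like subset of $\mathrm{g}$, and deduce homogeneity from that --- and, in view of the example above, it would also have to make precise (or restrict) the sense of ``minimal'' under which the claim can survive for semigroups rather than groups.
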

   \medskip

   \begin{lem}\label{KjkJ:01}\cite{aA-Hm12} Under notations above,
   the following assertions are equivalent: \ \\
   (i) $G$ has a dense orbit.\ \\ (ii)
 $G^{*}$ has a dense orbit.
   \end{lem}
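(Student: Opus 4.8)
The plan is to prove the equivalence between $G$ having a dense orbit and $G^{*}$ having a dense orbit by showing each direction, exploiting the structural relationship between the full semigroup $G$ and its invertible part $G^{*}=G\cap GL(n+1,\C)$. The implication $(ii)\Rightarrow(i)$ is essentially immediate: since $G^{*}\subset G$, any orbit of $G^{*}$ is contained in the corresponding orbit of $G$, so a dense orbit for $G^{*}$ forces the same orbit to be dense for $G$. I would state this at once and then concentrate the real work on the converse $(i)\Rightarrow(ii)$.

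For $(i)\Rightarrow(ii)$, suppose $G$ has a dense orbit, say $\overline{O_{G}(x)}=\mathbb{C}^{n+1}$ for some $x$. The key observation is that elements of $G$ that are \emph{not} invertible have singular linear part, so their images lie in a proper (lower-dimensional) subspace; concretely, each noninvertible element of $\Phi(\mathcal{G})$ maps $\mathbb{C}^{n+1}$ into a finite union of proper affine subspaces, which is nowhere dense and cannot contribute to local density. Thus the density of $O_{G}(x)$ must come, up to a nowhere-dense remainder, from $O_{G^{*}}(x)=\{gx:\ g\in G^{*}\}$. Since a countable (or at worst $\sigma$-compact) union of nowhere-dense sets cannot be dense together with the remaining part unless the invertible part is already dense, I would argue by a Baire-category style decomposition that $\overline{O_{G^{*}}(x)}=\mathbb{C}^{n+1}$ as well, giving a dense orbit for $G^{*}$.

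To make this rigorous I would pass to the normal form afforded by Proposition~\ref{p:0011}, conjugating by $P$ so that $G\subset\mathcal{K}^{*}_{\eta,r}(\mathbb{C})$, and use Lemma~\ref{KKHJ:01} together with the invariant set $V'=P(V)$: every orbit meeting $V'$ is minimal there, and if $G^{*}$ has a dense orbit in $V'$ then by part (ii) of that lemma every orbit in $V'$ is dense in $\mathbb{C}^{n+1}$. The strategy is to show that if $G$ has any dense orbit, that orbit must intersect $V'$ (the complement $\mathbb{C}^{n}\setminus V$ being a union of the proper invariant subspaces $H_{k}$), whence the point lies in $V'$ and the invertible part already generates a dense orbit inside $V'$, completing the equivalence via Lemma~\ref{KKHJ:01}(ii).

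The main obstacle I anticipate is precisely this category/dimension argument: one must verify carefully that the contribution of the noninvertible elements of $G$ is confined to a nowhere-dense set, so that density of the full orbit genuinely transfers to the invertible part. This requires knowing that a noninvertible affine map has image contained in a proper affine subspace and that the union of all such images, over the (countably generated) semigroup, remains topologically negligible; the abelian and finitely generated structure of $\mathcal{G}$ is what keeps this union manageable, and aligning it with the invariant decomposition $\mathbb{C}^{n}\setminus V=\bigcup_{k=1}^{r}H_{k}$ furnished just before Lemma~\ref{L:10000001} is the delicate bookkeeping step.
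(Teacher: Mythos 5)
First, a point of reference: the paper itself gives no proof of Lemma~\ref{KjkJ:01}; it is quoted directly from \cite{aA-Hm12}. Judged against the argument that actually works, your attempt contains a genuine gap. The direction (ii)$\Rightarrow$(i) is indeed immediate, and you have located the right ingredients (the normal form of Proposition~\ref{p:0011}, the subspaces $H_{k}$, the smallness of images of noninvertible maps), but the step that transfers density from $O_{G}(x)$ to $O_{G^{*}}(x)$ rests on a false principle. You argue that the noninvertible part of the orbit, being a countable union of nowhere dense sets, is ``topologically negligible'', so that density of $O_{G}(x)=O_{G^{*}}(x)\cup\{gx:\ g\in G\setminus G^{*}\}$ forces density of $O_{G^{*}}(x)$. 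That inference is invalid: a countable union of nowhere dense sets can perfectly well be dense (a countable dense set of points is such a union), so meagerness of the noninvertible part only rules out that it is \emph{all} of $\mathbb{C}^{n+1}$; it does not prevent it from carrying all of the density, leaving $O_{G^{*}}(x)$ non-dense. Your fallback route through Lemma~\ref{KKHJ:01}(ii) is circular as well: that lemma takes as \emph{hypothesis} that $G^{*}$ has a dense orbit in $V'$, which is exactly what you are trying to produce, and density of $O_{G}(x)$ plus the fact that this orbit meets $V'$ does not by itself give a dense $G^{*}$-orbit.

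The missing idea, which your sketch never states, is that after conjugating by $P$ \emph{every} noninvertible element of the semigroup has its image inside one and the same closed nowhere dense set. Concretely, an element $B\in\mathcal{K}_{\eta,r}(\mathbb{C})$ fails to be invertible exactly when the diagonal scalar $\mu_{k}$ of some block vanishes, and then the first coordinate of the $k$-th block of $Bu$ equals $\mu_{k}u_{k,1}=0$ for all $u$, i.e.\ $B(\mathbb{C}^{n+1})\subset H_{k}$. Hence every noninvertible $g\in G$ maps $\mathbb{C}^{n+1}$ into $\bigcup_{k=1}^{r}P(H_{k})$, a \emph{finite} union of proper linear subspaces, which is closed with empty interior. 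Then $O_{G}(x)\subset O_{G^{*}}(x)\cup\bigcup_{k=1}^{r}P(H_{k})$, so taking closures, $\mathbb{C}^{n+1}=\overline{O_{G^{*}}(x)}\cup\bigcup_{k=1}^{r}P(H_{k})$; the complement of $\bigcup_{k=1}^{r}P(H_{k})$ is open and dense and is contained in $\overline{O_{G^{*}}(x)}$, whence $\overline{O_{G^{*}}(x)}=\mathbb{C}^{n+1}$ (in particular $G^{*}\neq\emptyset$, since otherwise $O_{G}(x)$ would lie in a proper closed subset). It is the finiteness and closedness of this exceptional set --- not mere meagerness, and not per-element nowhere density --- that legitimizes the transfer; supplying this observation is precisely what your proposal lacks.
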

   \medskip

Denote by $\mathcal{G}^{*}=\mathcal{G}\cap GA(n, \C)$.

\begin{lem}\label{Klll:01} Under notations above, we have:\ \\
   (i) every orbit of $\mathcal{G}^{*}$ and contained in $U'$ is minimal in $U'$.\ \\
   (ii) if $\mathcal{G}$ has a dense orbit in $U'$ then every orbit
   of $U'$ is dense in $\mathbb{C}^{n}$.
   \end{lem}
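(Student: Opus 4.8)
The plan is to prove Lemma~\ref{Klll:01} by transporting the already-established linear statements of Lemma~\ref{KKHJ:01} for $G^{*}$ acting on $V'\subset\mathbb{C}^{n+1}$ down to the affine action of $\mathcal{G}^{*}$ on $U'\subset\mathbb{C}^{n}$. The bridge is the injective homomorphism $\Phi$ together with the projection $p_{1}$ along $v_{0}$ and the affine coordinate change $h=(Q,d)=\Phi^{-1}(P)$. Recall that $\Phi$ conjugates the affine action to a linear one: for $f\in\mathcal{G}^{*}$ and $x\in\mathbb{C}^{n}$ one has $\Phi(f)(1,x)=(1,f(x))$, so that $\{1\}\times\mathbb{C}^{n}$ is $G$-invariant and the map $x\mapsto(1,x)$ conjugates the $\mathcal{G}^{*}$-action on $\mathbb{C}^{n}$ to $G^{*}=\Phi(\mathcal{G}^{*})$ restricted to this hyperplane.

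First I would record the precise dictionary between the two pictures. Using Lemma~\ref{Lkl:01}(i), every $v\in\mathbb{C}^{n+1}$ decomposes uniquely as $v=p_{1}(v)+\lambda v_{0}$, and I would check that $p_{1}$ maps $V'$ onto a translate, governed by $h$, of $U$, intertwining the $G^{*}$-action on $V'$ with the $\mathcal{G}^{*}$-action on $U'$ after applying $h^{-1}$; concretely, that $p_{1}$ sends a $G^{*}$-orbit inside $V'$ to the $h$-image of a $\mathcal{G}^{*}$-orbit inside $U'$. This is precisely the orbit-level counterpart of the identity $p_{1}(\widetilde{\mathrm{g}}_{v_{0}})=\mathfrak{q}_{w_{0}}$ of Lemma~\ref{Lkl:01}(ii), and it shows that $p_{1}$, followed by $h^{-1}$, restricts to a homeomorphism carrying the orbit structure on $V'$ to that on $U'$.

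Granting this dictionary, part (i) follows at once: an orbit $O_{\mathcal{G}^{*}}(x)\subset U'$ corresponds to an orbit $O_{G^{*}}(v)\subset V'$, which by Lemma~\ref{KKHJ:01}(i) is minimal in $V'$; since minimality is preserved by the conjugating homeomorphism, $O_{\mathcal{G}^{*}}(x)$ is minimal in $U'$. For part (ii), if $\mathcal{G}$ has a dense orbit in $U'$ then, transporting through the dictionary and invoking Lemma~\ref{KjkJ:01} to pass from $\mathcal{G}$ (equivalently $G$) to $G^{*}$, the semigroup $G^{*}$ acquires a dense orbit in $V'$; Lemma~\ref{KKHJ:01}(ii) then forces every $G^{*}$-orbit in $V'$ to be dense in $\mathbb{C}^{n+1}$, and pulling this back through $p_{1}$ and $h^{-1}$ yields that every $\mathcal{G}^{*}$-orbit in $U'$ is dense in $\mathbb{C}^{n}$.

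The main obstacle is the verification of the dictionary in the first step: one must handle the oblique projection $p_{1}$ along $v_{0}=(1,w_{0})$ and the affine conjugation $h$ with care, since $V'$ is full-dimensional in $\mathbb{C}^{n+1}$ whereas $U'$ lives in $\mathbb{C}^{n}$, and one needs $p_{1}$ restricted to $V'$ to be a genuine homeomorphism onto its image commuting with the two actions up to $h$. In particular the passage must respect closures and interiors so that both minimality, for (i), and density in the ambient space, for (ii), are transported faithfully; this is where Lemma~\ref{Lkl:01} does the essential work.
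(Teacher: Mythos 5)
First, a point of comparison that matters here: the paper does not actually prove Lemma~\ref{Klll:01}. Its proof environment is an empty placeholder (it reads ``(i) (ii)'' and nothing else), so your attempt cannot be measured against the paper's argument and must stand on its own. It does not stand, because the ``dictionary'' on which you build everything is false. You require $p_{1}|_{V'}$, followed by $h^{-1}$, to be ``a genuine homeomorphism onto its image'' intertwining the $G^{*}$-action on $V'$ with the $\mathcal{G}^{*}$-action on $U'$. Both halves fail. For dimension reasons alone this is impossible: $V'=P(V)$ is a nonempty open subset of $\mathbb{C}^{n+1}$, while $p_{1}$ takes values in $\{0\}\times\mathbb{C}^{n}$, so $p_{1}|_{V'}$ collapses the direction $\mathbb{C}v_{0}$ and is nowhere injective. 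Moreover the intertwining itself only holds on the slice $\{1\}\times\mathbb{C}^{n}$: for $A=\Phi(f)$ with $f=(M,a)$ and $v=(\lambda,y)$ one computes $Av=(\lambda,\ My+\lambda a)$, hence $p_{1}(Av)=(0,\ My+\lambda a-\lambda w_{0})$, so on the hyperplane $\{\lambda\}\times\mathbb{C}^{n}$ the projected dynamics is that of the affine map $(M,\ \lambda(a+Mw_{0}-w_{0}))$, whose translation part is scaled by $\lambda$. Equivalently, $O_{G^{*}}(\lambda,y)=\lambda\left(\{1\}\times O_{\mathcal{G}^{*}}(y/\lambda)\right)$: a $G^{*}$-orbit in $V'$ with $\lambda\neq 1$ projects to a homothetic copy of a $\mathcal{G}^{*}$-orbit, not to the $h$-image of one. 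Note also that Lemma~\ref{Lkl:01}(ii), which you cite as the template, concerns the additive set $\widetilde{\mathrm{g}}_{v_{0}}$, an infinitesimal (log-level) object, not orbits; it cannot supply the missing orbit-level conjugacy.

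The gap is fatal for part (ii). Every element of $G=\Phi(\mathcal{G})$ fixes the first coordinate, so every $G^{*}$-orbit is contained in a single hyperplane $\{\lambda\}\times\mathbb{C}^{n}$ and therefore can never be dense in the open set $V'\subset\mathbb{C}^{n+1}$. Hence your step ``$G^{*}$ acquires a dense orbit in $V'$'' is unattainable, and Lemma~\ref{KKHJ:01}(ii), applied to this particular $G^{*}$, has a vacuous hypothesis. Any correct argument must bridge the dimension gap the way the paper does elsewhere (Lemmas~\ref{LL0L:1} and~\ref{LL1L:9}): use the embedding $x\mapsto(1,x)$ onto the $G$-invariant slice $\{1\}\times\mathbb{C}^{n}$ --- this, not $p_{1}$, is the correct conjugacy between the $\mathcal{G}^{*}$-action on $\mathbb{C}^{n}$ and the $G^{*}$-action, and (after checking that $U'$ coincides with $\{z\in\mathbb{C}^{n}:(1,z)\in V'\}$) it is enough to transport minimality for part (i) --- and pass to the enlarged semigroup $\widetilde{G}$ generated by $G$ and the scalars $\mathbb{C}I_{n+1}$, for which dense orbits in $\mathbb{C}^{n+1}$ are actually possible, before invoking any statement of the type of Lemma~\ref{KKHJ:01}(ii).
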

   \medskip

   \begin{proof}(i) \ \\ (ii)
   \end{proof}
   \medskip

   \begin{lem}\label{ALLA:01} Under notations above. Let $x\in \mathbb{C}^{n}$. Then if $y\in J_{\mathcal{G}}(x)$ then
   $(1,y)\in J_{G}(1,x)$.
      \end{lem}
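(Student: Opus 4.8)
The plan is to reduce the statement to the single intertwining identity that links the affine action of $\mathcal{G}$ on $\mathbb{C}^{n}$ with the linear action of $G=\Phi(\mathcal{G})$ on the hyperplane $\{1\}\times\mathbb{C}^{n}\subset\mathbb{C}^{n+1}$. From the definition of $\Phi$ and the block computation already recorded in the excerpt, for every $f=(A,a)\in\mathcal{G}$ and every $z\in\mathbb{C}^{n}$ one has
$$\Phi(f)\begin{bmatrix}1\\ z\end{bmatrix}=\begin{bmatrix}1 & 0\\ a & A\end{bmatrix}\begin{bmatrix}1\\ z\end{bmatrix}=\begin{bmatrix}1\\ a+Az\end{bmatrix}=\begin{bmatrix}1\\ f(z)\end{bmatrix},$$
that is, $\Phi(f)(1,z)=(1,f(z))$. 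This identity is the only structural fact I will need.

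First I would unwind the definition of the prolongational ($J$-)set. Saying $y\in J_{\mathcal{G}}(x)$ means there exist a sequence $(x_{m})_{m}$ in $\mathbb{C}^{n}$ with $x_{m}\to x$ and a sequence $(f_{m})_{m}$ in $\mathcal{G}$ such that $f_{m}(x_{m})\to y$. The goal is to produce analogous witnessing sequences certifying that $(1,y)\in J_{G}(1,x)$, namely a sequence in $\mathbb{C}^{n+1}$ converging to $(1,x)$ together with a sequence in $G$ whose images converge to $(1,y)$.

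The construction is then immediate: set $v_{m}=(1,x_{m})\in\{1\}\times\mathbb{C}^{n}$ and $M_{m}=\Phi(f_{m})\in G$. Since $x_{m}\to x$ in $\mathbb{C}^{n}$, continuity of the inclusion $z\mapsto(1,z)$ gives $v_{m}\to(1,x)$ in $\mathbb{C}^{n+1}$; and by the intertwining identity, $M_{m}v_{m}=\Phi(f_{m})(1,x_{m})=(1,f_{m}(x_{m}))\to(1,y)$. Hence $(1,y)\in J_{G}(1,x)$, as desired.

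The only point requiring care — and the closest thing to an obstacle — is to match the escape-to-infinity requirement implicit in the definition of the $J$-set: if $J_{\mathcal{G}}(x)$ demands that the selected maps $f_{m}$ eventually leave every finite (or compact) part of $\mathcal{G}$, I must verify that $M_{m}=\Phi(f_{m})$ inherits the same property in $G$. This follows because $\Phi$ is a continuous injective homomorphism whose inverse is continuous on $\Phi(GA(n,\mathbb{C}))$, so $(f_{m})$ escapes to infinity in $\mathcal{G}$ if and only if $(\Phi(f_{m}))$ does in $G$. Thus the witnessing data transfer faithfully under $\Phi$ and the proof closes.
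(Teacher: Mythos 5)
Your proof is correct, but there is nothing in the paper to measure it against: the paper's own ``proof'' of this lemma is an unfinished stub which reads, in its entirety, ``Let $x\in J_{\mathcal{G}}(y)$, then'' (note it even transposes $x$ and $y$) and breaks off there. Moreover, the paper never defines the sets $J_{\mathcal{G}}(x)$ and $J_{G}(1,x)$ anywhere, so you had to supply a definition; your reading of $y\in J_{\mathcal{G}}(x)$ as the existence of sequences $x_{m}\to x$ in $\mathbb{C}^{n}$ and $(f_{m})_{m}$ in $\mathcal{G}$ with $f_{m}(x_{m})\to y$ is the standard prolongational-limit-set definition and the only reasonable one in this context. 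Under that reading your argument is complete and is exactly in the spirit of the paper's other transfer results: the intertwining identity $\Phi(f)(1,z)=(1,f(z))$ is the same mechanism underlying Lemma~\ref{LL1L:9} and Lemma~\ref{L:00120001} (where $\{1\}\times\mathcal{G}(x)=G(1,x)$ holds ``by construction''), and pushing the witnessing sequences through $\Phi$ gives $(1,x_{m})\to(1,x)$ and $\Phi(f_{m})(1,x_{m})=(1,f_{m}(x_{m}))\to(1,y)$, i.e.\ $(1,y)\in J_{G}(1,x)$.

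Your closing caveat about an escape-to-infinity clause is also handled correctly: $\Phi$ is injective, and it is a homeomorphism of $\mathcal{G}$ onto $G=\Phi(\mathcal{G})$ because its inverse on the image is the continuous read-off of the blocks $a$ and $A$; hence a sequence leaves every finite (respectively compact) subset of $\mathcal{G}$ if and only if its image leaves every finite (respectively compact) subset of $G$. In short, there is no gap on your side --- the gap is in the paper, and your paragraph is a suitable replacement for its empty proof.
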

   \medskip

   \begin{proof} Let $x\in J_{\mathcal{G}}(y)$, then
   \end{proof}
   \medskip

\begin{lem}\label{LL0L:1} The following assertions are equivalent:\
\begin{itemize}
  \item [(i)] $\overline{\mathfrak{q}_{w_{0}}}=\mathbb{C}^{n}$.\
  \item [(ii)] $\overline{\mathrm{g}^{1}_{v_{0}}}=\{0\}\times\mathbb{C}^{n}$.\
  \item [(iii)] $\overline{\widetilde{\mathrm{g}}_{v_{0}}}=\mathbb{C}^{n+1}$.\
\end{itemize}
\end{lem}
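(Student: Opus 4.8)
The plan is to establish the chain of equivalences by exploiting the explicit relationships between the three objects $\mathfrak{q}_{w_{0}}$, $\mathrm{g}^{1}_{v_{0}}$, and $\widetilde{\mathrm{g}}_{v_{0}}$ that were already recorded in the preceding lemmas. The key observation is that these are not three independent sets but are tied together by the decomposition $\mathbb{C}^{n+1}=\{0\}\times\mathbb{C}^{n}\oplus\mathbb{C}v_{0}$ from Lemma~\ref{Lkl:01}(i), together with the identities $\mathrm{g}^{1}_{v_{0}}=\{0\}\times\mathfrak{q}_{w_{0}}$ from Lemma~\ref{L:01234}(ii) and $\widetilde{\mathrm{g}}=\mathrm{g}^{1}+\mathbb{C}I_{n+1}$ from Lemma~\ref{L:01234}(i).

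First I would prove $(i)\Leftrightarrow(ii)$, which is essentially immediate. By Lemma~\ref{L:01234}(ii) we have $\mathrm{g}^{1}_{v_{0}}=\{0\}\times\mathfrak{q}_{w_{0}}$ as subsets of $\{0\}\times\mathbb{C}^{n}\cong\mathbb{C}^{n}$. Since the map $y\mapsto(0,y)$ is a homeomorphism of $\mathbb{C}^{n}$ onto $\{0\}\times\mathbb{C}^{n}$, taking closures commutes with it, so $\overline{\mathrm{g}^{1}_{v_{0}}}=\{0\}\times\overline{\mathfrak{q}_{w_{0}}}$. Thus $\overline{\mathfrak{q}_{w_{0}}}=\mathbb{C}^{n}$ holds exactly when $\overline{\mathrm{g}^{1}_{v_{0}}}=\{0\}\times\mathbb{C}^{n}$, giving the first equivalence with no real work.

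Next I would handle $(ii)\Leftrightarrow(iii)$, which is the substantive part. Applying $v_{0}$ to the identity $\widetilde{\mathrm{g}}=\mathrm{g}^{1}+\mathbb{C}I_{n+1}$ yields $\widetilde{\mathrm{g}}_{v_{0}}=\mathrm{g}^{1}_{v_{0}}+\mathbb{C}v_{0}$, since $(\lambda I_{n+1})v_{0}=\lambda v_{0}$. The direction $(iii)\Rightarrow(ii)$ I would get by projecting: the map $p_{1}$ of Lemma~\ref{Lkl:01}(ii) kills the $\mathbb{C}v_{0}$ summand and is continuous, so if $\widetilde{\mathrm{g}}_{v_{0}}$ is dense in $\mathbb{C}^{n+1}$ then its image $p_{1}(\widetilde{\mathrm{g}}_{v_{0}})=\mathfrak{q}_{w_{0}}$ is dense in $\{0\}\times\mathbb{C}^{n}$, whence (ii) via the identification used above. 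The direction $(ii)\Rightarrow(iii)$ is where I expect the main obstacle: from density of $\mathrm{g}^{1}_{v_{0}}$ in the hyperplane $\{0\}\times\mathbb{C}^{n}$ I must upgrade to density of $\mathrm{g}^{1}_{v_{0}}+\mathbb{C}v_{0}$ in the full space $\mathbb{C}^{n+1}$.

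The resolution of that last step is a general fact about sets of the form $S+W$ where $W$ is a linear subspace complementary to the span of $S$: if $\overline{S}$ contains a complement to $W$ (here $\overline{\mathrm{g}^{1}_{v_{0}}}\supseteq\{0\}\times\mathbb{C}^{n}$ and $W=\mathbb{C}v_{0}$), then $\overline{S+W}=\overline{S}+W\supseteq(\{0\}\times\mathbb{C}^{n})+\mathbb{C}v_{0}=\mathbb{C}^{n+1}$, where the inclusion $\overline{S}+W\subseteq\overline{S+W}$ uses only that $W$ is a fixed subspace along which translation is continuous. I would be slightly careful here because $\mathrm{g}^{1}_{v_{0}}$ is only an additive sub-semigroup rather than a full subgroup, so I cannot invoke subgroup-closure theorems; instead the argument above is purely topological and needs only continuity of addition, so the semigroup structure causes no difficulty. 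Assembling $(i)\Leftrightarrow(ii)$ and $(ii)\Leftrightarrow(iii)$ then completes the proof.
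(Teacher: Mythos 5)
Your proof is correct and follows essentially the same route as the paper's: both arguments rest on the identities $\mathrm{g}^{1}_{v_{0}}=\{0\}\times\mathfrak{q}_{w_{0}}$ and $\widetilde{\mathrm{g}}_{v_{0}}=\mathrm{g}^{1}_{v_{0}}+\mathbb{C}v_{0}$ from Lemma~\ref{L:01234}, together with the splitting $\mathbb{C}^{n+1}=\{0\}\times\mathbb{C}^{n}\oplus\mathbb{C}v_{0}$. The only differences are cosmetic: for $(iii)\Rightarrow(ii)$ the paper unwinds your projection argument into an explicit sequence computation (writing $A_{m}v_{0}=\lambda_{m}v_{0}+B_{m}v_{0}$ and showing $\lambda_{m}\to 0$), and for $(ii)\Rightarrow(iii)$ it invokes the subgroup property of $\widetilde{\mathrm{g}}_{v_{0}}$ (Lemma~\ref{LL:002}) where you correctly observe that continuity of addition alone suffices.
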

\medskip

\begin{proof} $(i)\Longleftrightarrow(ii)$  follows from the fact that $\{0\}\times\mathfrak{q}_{w_{0}}=\mathrm{g}^{1}_{v_{0}}$
 (Lemma ~\ref{L:01234},(ii)).
\
\\
$(ii)\Longrightarrow(iii):$ By Lemma ~\ref{L:01234},(ii),
$\widetilde{\mathrm{g}}_{v_{0}}=\mathrm{g}^{1}_{v_{0}}+\mathbb{C}v_{0}$.
    Since $v_{0}=(1,w_{0})\notin \{0\}\times\mathbb{C}^{n}$ and
 $\mathbb{C}I_{n+1}\subset \widetilde{\mathrm{g}}$,
 we obtain $\mathbb{C}v_{0}\subset\widetilde{\mathrm{g}}_{v_{0}}$ and so
 $\mathbb{C}v_{0}\subset\overline{\widetilde{\mathrm{g}}_{v_{0}}}$. Therefore
 $\mathbb{C}^{n+1}=\{0\}\times\mathbb{C}^{n}\oplus \mathbb{C}v_{0}=
 \overline{\mathrm{g}^{1}_{v_{0}}}\oplus \mathbb{C}v_{0}\subset \overline{\widetilde{\mathrm{g}}_{v_{0}}}$
 (since, by Lemma~\ref{LL:002}, $\widetilde{\mathrm{g}}_{v_{0}}$ is an additive subgroup of $\mathbb{C}^{n+1}$).
 Thus $\overline{\widetilde{\mathrm{g}}_{v_{0}}}= \mathbb{C}^{n+1}$.\
 \\
$(iii)\Longrightarrow(ii):$ Let $x\in \mathbb{C}^{n}$, then
$(0,x)\in \overline{\widetilde{\mathrm{g}}_{v_{0}}}$
 and there exists a sequence $(A_{m})_{m\in\mathbb{N}}\subset\widetilde{\mathrm{g}}$ such that $\underset{m\to+\infty}{lim}A_{m}v_{0}=(0,x)$.
 By Lemma~\ref{L:01234}, we can write $A_{m}v_{0}=\lambda_{m}v_{0}+B_{m}v_{0}$ with $\lambda_{m}\in \mathbb{C}$ and
 $B_{m}=\left[\begin{array}{cc}
                0 & 0 \\
                b_{m} & B^{1}_{m}
              \end{array}
 \right]\in\mathrm{g}^{1}$ for every $m\in\mathbb{N}$. Since $B_{m}v_{0}\in\{0\}\times \mathbb{C}^{n}$ for every
 $m\in\mathbb{N}$ then $A_{m}v_{0}=(\lambda_{m},\ b_{m}+B^{1}_{m}w_{0}+\lambda_{m}w_{0})$.
It follows that $\underset{m\to+\infty}{lim}\lambda_{m}=0$ and
$\underset{m\to+\infty}{lim}A_{m}v_{0}=\underset{m\to+\infty}{lim}B_{m}v_{0}=(0,
x)$, thus $(0,x)\in\overline{\mathrm{g}^{1}_{v_{0}}}$. Hence
$\{0\}\times\mathbb{C}^{n}\subset
\overline{\mathrm{g}^{1}_{v_{0}}}$. Since
$\mathrm{g}^{1}\subset\Psi(MA(n, \mathbb{C}))$,
$\mathrm{g}^{1}_{v_{0}}\subset \{0\}\times\mathbb{C}^{n}$ then we
conclude that
$\overline{\mathrm{g}^{1}_{v_{0}}}=\{0\}\times\mathbb{C}^{n}$.
\end{proof}
\medskip

\begin{lem}\label{LL1L:9} Let $x\in\mathbb{C}^{n}$ and $G=\Phi(\mathcal{G})$. The following are equivalent:\
\begin{itemize}
  \item [(i)] $\overline{O_{\mathcal{G}}(x)}=\mathbb{C}^{n}$.
  \item [(ii)] $\overline{O_{G(1,x)}}=\{1\}\times\mathbb{C}^{n}$.
  \item [(iii)] $\overline{O_{\widetilde{G}}(1,x)}=\mathbb{C}^{n+1}$.
\end{itemize}
\end{lem}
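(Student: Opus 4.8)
The plan is to reduce the whole statement to the single commutation identity satisfied by $\Phi$. For $f=(A,a)\in\mathcal{G}$ and $x\in\mathbb{C}^{n}$ one has
$$\Phi(f)\begin{bmatrix}1\\ x\end{bmatrix}=\begin{bmatrix}1 & 0\\ a & A\end{bmatrix}\begin{bmatrix}1\\ x\end{bmatrix}=\begin{bmatrix}1\\ Ax+a\end{bmatrix}=\begin{bmatrix}1\\ f(x)\end{bmatrix},$$
so that $O_{G}(1,x)=\{1\}\times O_{\mathcal{G}}(x)$. Because $\{1\}\times\mathbb{C}^{n}$ is closed in $\mathbb{C}^{n+1}$ and $y\mapsto(1,y)$ is a homeomorphism onto it, passing to closures yields $\overline{O_{G}(1,x)}=\{1\}\times\overline{O_{\mathcal{G}}(x)}$, which gives $(i)\Longleftrightarrow(ii)$ at once. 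For the remaining two conditions I would first record the shape of the enlarged semigroup: since $\mathbb{C}I_{n+1}$ is central, every element of $\widetilde{G}$ is either $\lambda A$ with $\lambda\in\mathbb{C}$, $A\in G$, or a pure scalar $\lambda I_{n+1}$, whence
$$O_{\widetilde{G}}(1,x)=\{(\lambda,\lambda f(x))\ :\ \lambda\in\mathbb{C},\ f\in\mathcal{G}\}\cup\{(\lambda,\lambda x)\ :\ \lambda\in\mathbb{C}\}.$$

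For $(ii)\Longrightarrow(iii)$ I would argue that the closed cone $\{(\lambda,\lambda y):\lambda\in\mathbb{C},\ y\in\mathbb{C}^{n}\}$ is all of $\mathbb{C}^{n+1}$ and is contained in $\overline{O_{\widetilde{G}}(1,x)}$. Indeed, assuming $\overline{O_{\mathcal{G}}(x)}=\mathbb{C}^{n}$, a target $(\alpha,\beta)$ with $\alpha\neq0$ is reached by taking $\lambda=\alpha$ and choosing $f\in\mathcal{G}$ with $f(x)$ near $\beta/\alpha$, so that $(\alpha,\alpha f(x))$ is near $(\alpha,\beta)$. A target $(0,\beta)$ on the coordinate hyperplane is reached by fixing a small $\lambda\neq0$ and choosing $f$ with $f(x)$ near $\beta/\lambda$; then $(\lambda,\lambda f(x))\to(0,\beta)$ as $\lambda\to0$, using that density of $O_{\mathcal{G}}(x)$ lets us track the large points $\beta/\lambda$.

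For $(iii)\Longrightarrow(ii)$ I would fix $y\neq x$ and approximate $(1,y)$ by $B_{m}(1,x)=(\lambda_{m},\lambda_{m}z_{m})$ with $z_{m}\in O_{\mathcal{G}}(x)\cup\{x\}$; the first coordinate forces $\lambda_{m}\to1$, hence $z_{m}\to y$, and since $y\neq x$ the pure-scalar points $(\lambda_{m},\lambda_{m}x)$ cannot accumulate at $(1,y)$, so infinitely many $z_{m}$ lie in $O_{\mathcal{G}}(x)$ and therefore $y\in\overline{O_{\mathcal{G}}(x)}$. This shows $\mathbb{C}^{n}\setminus\{x\}\subset\overline{O_{\mathcal{G}}(x)}$, and as $\mathbb{C}^{n}\setminus\{x\}$ is dense in $\mathbb{C}^{n}$ we conclude $\overline{O_{\mathcal{G}}(x)}=\mathbb{C}^{n}$, i.e. $(i)$, hence $(ii)$. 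The two genuinely non-formal points, and what I expect to be the main obstacle, are precisely these boundary phenomena: reaching the hyperplane $\{0\}\times\mathbb{C}^{n}$ in $(ii)\Longrightarrow(iii)$ by coupling small scalars with unbounded orbit points, and discarding the pure-scalar contributions in $(iii)\Longrightarrow(ii)$, which is handled cleanly by the density of $\mathbb{C}^{n}\setminus\{x\}$.
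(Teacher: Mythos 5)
Your proof is correct and follows essentially the same route as the paper: both reduce everything through $\Phi$ (giving $O_{G}(1,x)=\{1\}\times O_{\mathcal{G}}(x)$, hence (i)$\Leftrightarrow$(ii)), decompose elements of $\widetilde{G}$ as scalar multiples of elements of $G$, and use the first coordinate either to force the scalars to tend to $1$ (for (iii)$\Rightarrow$(ii)) or to dilate $\{1\}\times\mathbb{C}^{n}$ so as to fill out a dense subset of $\mathbb{C}^{n+1}$ (for (ii)$\Rightarrow$(iii)). Two remarks. First, you are actually more careful than the paper in (iii)$\Rightarrow$(ii): the paper asserts that every $B_{m}\in\widetilde{G}$ can be written $\lambda_{m}\Phi(f_{m})$ with $\lambda_{m}\in\mathbb{C}^{*}$, which overlooks the pure-scalar elements $\lambda I_{n+1}$ (these need not have that form when $\mathrm{id}\notin\mathcal{G}$, since $\mathcal{G}$ is only a semigroup) as well as zero multiples; your device --- excluding $y=x$ so that the points $(\lambda_{m},\lambda_{m}x)$ cannot accumulate at $(1,y)$, then invoking density of $\mathbb{C}^{n}\setminus\{x\}$ --- closes this gap cleanly. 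Second, one sentence of yours is literally false: the cone $\{(\lambda,\lambda y):\lambda\in\mathbb{C},\ y\in\mathbb{C}^{n}\}$ is neither closed nor all of $\mathbb{C}^{n+1}$, as it misses $(0,\beta)$ for $\beta\neq 0$; this does not damage the argument, because your subsequent two-case approximation (targets with $\alpha\neq 0$ hit directly; targets $(0,\beta)$ reached by coupling $\lambda\to 0$ with orbit points near $\beta/\lambda$) establishes exactly what is needed, namely $\overline{O_{\widetilde{G}}(1,x)}=\mathbb{C}^{n+1}$, and it is the same density-of-the-complement-of-the-hyperplane idea that the paper expresses more compactly by writing $\mathbb{C}^{n+1}=\overline{\bigcup_{\lambda\in\mathbb{C}^{*}}\lambda\,\overline{O_{G}(1,x)}}\subset\overline{O_{\widetilde{G}}(1,x)}$.
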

\medskip

\begin{proof} $(i)\Longleftrightarrow (ii):$  is obvious since $\{1\}\times\mathcal{G}(x)=G(1,x)$ by construction.
\
\\
$(iii)\Longrightarrow (ii):$  Let $y\in \mathbb{C}^{n}$ and
$(B_{m})_{m}$ be a sequence in $\widetilde{G}$ such that
$\underset{m\to +\infty}{lim}B_{m}(1,x)=(1,y)$. One can write
$B_{m}=\lambda_{m}\Phi(f_{m})$, with $f_{m}\in \mathcal{G}$ and
$\lambda_{m}\in\mathbb{C}^{*}$, thus $B_{m}(1,x)=(\lambda_{m},\ \
\lambda_{m}f_{m}(x))$, so $\underset{m\to
+\infty}{lim}\lambda_{m}=1$. Therefore, $\underset{m\to
+\infty}{lim}\Phi(f_{m})(1,x)=\underset{m\to
+\infty}{lim}\frac{1}{\lambda_{m}}B_{m}(1,x)=(1,y)$. Hence,
$(1,y)\in \overline{O_{G}(1,x)}$. \
\\
$(ii)\Longrightarrow (iii):$ Since
$\mathbb{C}^{n+1}\backslash(\{0\}\times
\mathbb{C}^{n})=\underset{\lambda\in
\mathbb{C}^{*}}{\bigcup}\lambda
\left(\{1\}\times\mathbb{C}^{n}\right)$
 and for every $\lambda\in\mathbb{C}^{*}$, $\lambda G(1, x)\subset O_{\widetilde{G}}(1,x)$, we get \begin{align*}
\mathbb{C}^{n+1} & =
\overline{\mathbb{C}^{n+1}\backslash(\{0\}\times
\mathbb{C}^{n})}\\ \ & =\overline{\underset{\lambda\in
\mathbb{C}^{*}}{\bigcup}\lambda
\left(\{1\}\times\mathbb{C}^{n}\right)}\\ \ & =
\overline{\underset{\lambda\in \mathbb{C}^{*}}{\bigcup}\lambda
\overline{O_{G}(1,x)}} \subset \overline{O_{\widetilde{G}}(1,x)}
\end{align*}
Hence $\mathbb{C}^{n+1}=\overline{O_{\widetilde{G}}(1, x)}$.
\end{proof}

\medskip
\
\\
Recall the following result proved in \cite{aA-Hm12} which applied
to $G$ can be stated as following:
\medskip

\begin{prop}\label{p:10}$($\cite{aA-Hm12}, Proposition 5.1$)$ Let $G$ be an abelian sub-semigroup of $M_{n}(\mathbb{C})$ such that $G^{*}$ is generated by $A_{1},\dots,A_{p}$
and let  $B_{1},\dots,B_{p}\in \mathrm{g}$ such that  $A_{k} =
e^{B_{k}}$, $k = 1,\dots,p$ and $P\in GL(n+1,\mathbb{C})$
satisfying $P^{-1}GP\subset \mathcal{K}_{\eta,r}(\mathbb{C})$.
Then:
 $$\mathrm{g} =
\underset{k=1}{\overset{p}{\sum}}\mathbb{N}B_{k}+2i\pi\underset{k=1}{\overset{r}{\sum}}\mathbb{Z}PJ_{k}P^{-1}
\ \  \mathrm{and} \ \
 \ \mathrm{g}_{v_{0}} = \underset{k=1}{\overset{p}{\sum}}\mathbb{N}B_{k}v_{0} + \underset{k=1}{\overset{r}{\sum}} 2i\pi \mathbb{Z}Pe^{(k)},$$
where $J_{k} =\mathrm{diag}(J_{k,1},\dots, J_{k,r})$ with \
$J_{k,i}=0\in \mathbb{T}_{n_{i}}(\mathbb{C})$ if $i\neq k$ and
$J_{k,k} = I_{n_{k}}$.
 \end{prop}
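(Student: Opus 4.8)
The plan is to realize $\mathrm{g}$ as a preimage under $\exp$, using two facts: that $\exp$ restricts to a homomorphism $(\mathrm{g},+)\to(G^{*},\cdot)$, and that the relevant kernel is exactly the lattice $2i\pi\sum_{k}\mathbb{Z}PJ_{k}P^{-1}$. I would first record these. Since $G$ is abelian and every element of $\mathrm{g}$ lies in the block-triangular algebra $P\mathcal{K}_{\eta,r}(\mathbb{C})P^{-1}$, in which each diagonal block has the shape $\mu I+(\text{strictly lower-triangular nilpotent})$, the logarithms of commuting group elements again commute: on each block the scalar part is central and the nilpotent parts commute because the corresponding group elements do (this is the content making $\mathrm{g}$ a group, Lemma~\ref{LL:002}). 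Hence $e^{M+N}=e^{M}e^{N}$ for all $M,N\in\mathrm{g}$. Next I would compute $\ker(\exp)\cap P\mathcal{K}_{\eta,r}(\mathbb{C})P^{-1}$: conjugating by $P^{-1}$ reduces to $\mathcal{K}_{\eta,r}(\mathbb{C})$, and writing a block as $\mu_{k}I_{n_{k}}+M_{k}$ with $M_{k}$ nilpotent, $e^{\mu_{k}I+M_{k}}=e^{\mu_{k}}e^{M_{k}}=I_{n_{k}}$ forces $e^{\mu_{k}}=1$ and $e^{M_{k}}=I_{n_{k}}$, i.e. $\mu_{k}\in 2i\pi\mathbb{Z}$ and $M_{k}=0$ by uniqueness of the logarithm of a unipotent matrix. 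This gives exactly $\ker(\exp)\cap P\mathcal{K}_{\eta,r}(\mathbb{C})P^{-1}=2i\pi\sum_{k=1}^{r}\mathbb{Z}PJ_{k}P^{-1}$.

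With these in hand the identity for $\mathrm{g}$ follows by a double inclusion. For the inclusion $\supseteq$, each $B_{k}\in\mathrm{g}$ and $\mathrm{g}$ is an additive group, so $\sum_{k}\mathbb{N}B_{k}\subseteq\mathrm{g}$; moreover $0\in\mathrm{g}$ yields $I=e^{0}\in\exp(\mathrm{g})=G^{*}\subseteq G$ (Lemma~\ref{L:21013}), so each $2i\pi PJ_{k}P^{-1}$, having exponential $I$, lies in $\exp^{-1}(G)\cap P\mathcal{K}_{\eta,r}(\mathbb{C})P^{-1}=\mathrm{g}$, and the group $\mathrm{g}$ absorbs the whole lattice. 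For the inclusion $\subseteq$, take $N\in\mathrm{g}$; then $e^{N}\in G^{*}$, which by hypothesis equals $A_{1}^{m_{1}}\cdots A_{p}^{m_{p}}=e^{m_{1}B_{1}}\cdots e^{m_{p}B_{p}}=e^{M}$ with $M=\sum_{k}m_{k}B_{k}$ and $m_{k}\in\mathbb{N}$ (using that the $B_{k}$ commute). By the homomorphism property $e^{N-M}=e^{N}(e^{M})^{-1}=I$, and $N-M\in\mathrm{g}\subseteq P\mathcal{K}_{\eta,r}(\mathbb{C})P^{-1}$, so $N-M$ lies in the kernel computed above; therefore $N=M+(N-M)\in\sum_{k}\mathbb{N}B_{k}+2i\pi\sum_{k}\mathbb{Z}PJ_{k}P^{-1}$.

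The orbit formula is then obtained by applying the matrix identity to $v_{0}$ and simplifying the lattice part. We get $\mathrm{g}_{v_{0}}=\{Nv_{0}:N\in\mathrm{g}\}=\sum_{k}\mathbb{N}B_{k}v_{0}+2i\pi\sum_{k}\mathbb{Z}(PJ_{k}P^{-1})v_{0}$, and since $v_{0}=Pu_{0}$ with $u_{0}=(e_{1,1},\dots,e_{r,1})$ one computes $(PJ_{k}P^{-1})v_{0}=PJ_{k}u_{0}=Pe^{(k)}$, because $J_{k}$ projects $u_{0}$ onto its $k$-th block $e_{k,1}$, which is precisely the vector $e^{(k)}$. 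This yields $\mathrm{g}_{v_{0}}=\sum_{k}\mathbb{N}B_{k}v_{0}+2i\pi\sum_{k}\mathbb{Z}Pe^{(k)}$.

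The main obstacle is the homomorphism property, namely verifying that the logarithms in $\mathrm{g}$ pairwise commute so that $\exp|_{\mathrm{g}}$ converts addition into multiplication; once this is secured (it is essentially the same commutation that underlies Lemma~\ref{LL:002}), the kernel computation and the two inclusions are routine. One must also take care that ``generated by $A_{1},\dots,A_{p}$'' is used in the semigroup sense, giving nonnegative exponents $m_{k}\in\mathbb{N}$, which is exactly what matches the coefficients $\mathbb{N}B_{k}$ (rather than $\mathbb{Z}B_{k}$) appearing in the statement.
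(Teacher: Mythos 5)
There is no internal proof to compare you against: the paper does not prove Proposition~\ref{p:10}, it quotes it from \cite{aA-Hm12} (Proposition 5.1) and uses it as a black box. Judged on its own terms, your argument is correct and is the natural one. Its three ingredients all hold: (a) commuting elements of $G$ have commuting logarithms inside $P\mathcal{K}_{\eta,r}(\mathbb{C})P^{-1}$, because on each diagonal block the scalar part is central and the nilpotent part of the logarithm is the logarithm polynomial evaluated at the unipotent part of the exponential, so commutativity of exponentials transfers to the logarithms; this makes $\exp$ carry sums in $\mathrm{g}$ to products in $G^{*}$; (b) the kernel computation $\ker(\exp)\cap P\mathcal{K}_{\eta,r}(\mathbb{C})P^{-1}=2i\pi\sum_{k=1}^{r}\mathbb{Z}PJ_{k}P^{-1}$ is right (a scalar multiple of a unipotent matrix equals $I$ only if both factors are trivial, and the nilpotent logarithm of the identity is $0$); and (c) the evaluation $PJ_{k}P^{-1}v_{0}=PJ_{k}u_{0}=Pe^{(k)}$, together with linearity of $N\mapsto Nv_{0}$, turns the matrix identity into the orbit identity. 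Your two inclusions then go through exactly as written, and your remark about semigroup generation matching the coefficients $\mathbb{N}B_{k}$ is the right reading of the statement.

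One step deserves a flag: to place the lattice $2i\pi\sum_{k}\mathbb{Z}PJ_{k}P^{-1}$ inside $\mathrm{g}$ you need $I_{n+1}\in G$, which you extract from $0\in\mathrm{g}$ via Lemma~\ref{LL:002}'s assertion that $\mathrm{g}$ is an additive \emph{group}. That lemma is imported from the group setting; for a genuine semigroup $G$ not containing the identity (e.g.\ $G$ generated by the single matrix $\mathrm{diag}(1,2)$, so $G=\{\mathrm{diag}(1,2^{m}):m\geq 1\}$) one has $0\notin\mathrm{g}$, $\mathrm{g}$ is not a group, and the proposition's right-hand side, which contains $0$, strictly contains $\mathrm{g}$. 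So the statement itself implicitly assumes $I_{n+1}\in G$, i.e.\ that ``generated'' is meant in the monoid sense. This is a defect inherited from the paper's (and the cited paper's) formulation rather than a gap you introduced: granting Lemma~\ref{LL:002} as the paper states it, your proof is complete.
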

\medskip

\begin{prop} \label{p:11} Let  $\mathcal{G}$ be an abelian sub-semigroup of  $GA(n, \mathbb{C})$ such that $\mathcal{G}^{*}$ is generated by
 $f_{1},\dots,f_{p}$ and let  $f'_{1},\dots,f'_{p}\in
\mathfrak{q}$ such that $e^{\Psi(f'_{k})}=\Phi(f_{k})$, $k =
1,..,p$.  Let $P$ be as in Proposition~\ref{p:2}. Then:
 $$\mathfrak{q}_{w_{0}}=\left\{\begin{array}{c}
                                     \underset{k=1}{\overset{p}{\sum}}\mathbb{N}f'_{k}(w_{0}) + \underset{k=2}{\overset{r}{\sum}} 2i\pi \mathbb{Z}p_{2}(Pe^{(k)}),\ if\ r\geq2 \\
                                     \underset{k=1}{\overset{p}{\sum}}\mathbb{N}f'_{k}(w_{0}) , \ if\ r=1\ \ \ \ \ \ \ \ \  \ \ \ \ \ \ \ \ \ \  \ \ \ \ \ \ \ \
                                   \end{array}
\right.$$
 \end{prop}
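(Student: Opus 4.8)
The plan is to reduce everything to the computation of $\mathrm{g}^1_{v_0}$ and then transport it down to $\mathbb{C}^n$. By Lemma~\ref{L:01234}(ii) we have $\{0\}\times\mathfrak{q}_{w_0}=\mathrm{g}^1_{v_0}$, so it suffices to determine $\mathrm{g}^1_{v_0}$ and then apply the projection $p_2$, which on $\{0\}\times\mathbb{C}^n$ merely forgets the (vanishing) first coordinate and is additive. Thus the real task is to compute $\mathrm{g}^1=\mathrm{g}\cap\Psi(MA(n,\mathbb{C}))$ explicitly and to evaluate it at $v_0$.

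First I would apply Proposition~\ref{p:10} to $G=\Phi(\mathcal{G})$, using the specific lifts $B_k:=\Psi(f'_k)$. These are admissible choices because $e^{B_k}=e^{\Psi(f'_k)}=\Phi(f_k)=A_k$ and $\Psi(f'_k)\in\mathrm{g}^1\subset\mathrm{g}$; crucially, each $B_k$ lies in $\Psi(MA(n,\mathbb{C}))$, hence has vanishing first row. Proposition~\ref{p:10} then yields $\mathrm{g}=\sum_{k=1}^p\mathbb{N}B_k+2i\pi\sum_{k=1}^r\mathbb{Z}PJ_kP^{-1}$, and I will also record the identity $PJ_kP^{-1}v_0=PJ_ku_0=Pe^{(k)}$, which follows from $J_ku_0=e^{(k)}$ (the block $J_{k,k}=I_{n_k}$ fixes $e_{k,1}$ and the other blocks kill the remaining components of $u_0$).

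The crux is to intersect with $\Psi(MA(n,\mathbb{C}))$, which is the linear subspace of matrices whose first row is zero. Since each $B_k$ already has zero first row, an element $\sum a_kB_k+2i\pi\sum m_kPJ_kP^{-1}$ belongs to $\Psi(MA(n,\mathbb{C}))$ exactly when the first row of $2i\pi\sum m_kPJ_kP^{-1}$ vanishes. Here I would carry out the key computation: writing $P=\begin{bmatrix}1&0\\ d&Q\end{bmatrix}$, the first row of $P$ is $e_1^{T}$, so the first row of $PJ_kP^{-1}$ equals $e_1^{T}J_kP^{-1}$; since $J_k$ is block diagonal with $J_{k,1}=I_{n_1}$ when $k=1$ and $J_{k,1}=0$ otherwise, this first row is $e_1^{T}P^{-1}=e_1^{T}$ for $k=1$ and $0$ for $k\geq 2$. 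Consequently the membership condition forces the coefficient $m_1=0$, giving $\mathrm{g}^1=\sum_{k=1}^p\mathbb{N}B_k+2i\pi\sum_{k=2}^r\mathbb{Z}PJ_kP^{-1}$; both inclusions are immediate once this first-row behaviour is pinned down. This is the step I expect to be the main obstacle, since everything hinges on seeing that $PJ_1P^{-1}$ is the unique generator with nonzero first row and must therefore be discarded.

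Finally I would evaluate at $v_0$, using $B_kv_0=\Psi(f'_k)v_0=(0,f'_k(w_0))$ (the computation already performed in Lemma~\ref{L:01234}(ii)) together with $PJ_kP^{-1}v_0=Pe^{(k)}$, and invoking that $\mathrm{g}^1$ is an additive sub-semigroup so that $N\mapsto Nv_0$ distributes over the sum. This gives $\mathrm{g}^1_{v_0}=\sum_{k=1}^p\mathbb{N}(0,f'_k(w_0))+2i\pi\sum_{k=2}^r\mathbb{Z}Pe^{(k)}$; note that $Pe^{(k)}\in\{0\}\times\mathbb{C}^n$ for $k\geq 2$, since the first coordinate of $e^{(k)}$ then vanishes, consistently with $\mathrm{g}^1_{v_0}\subset\{0\}\times\mathbb{C}^n$. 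Applying $p_2$ and the identity $\{0\}\times\mathfrak{q}_{w_0}=\mathrm{g}^1_{v_0}$ produces the stated formula, the sum $\sum_{k=2}^r$ being empty precisely when $r=1$.
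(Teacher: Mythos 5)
Your proposal is correct and follows essentially the same route as the paper: apply Proposition~\ref{p:10} with $B_k=\Psi(f'_k)$, discard the $PJ_1P^{-1}$ generator when intersecting $\mathrm{g}$ with $\Psi(MA(n,\mathbb{C}))$, evaluate at $v_0$ using $J_ku_0=e^{(k)}$, and descend via Lemma~\ref{L:01234}(ii) and $p_2$. Your explicit first-row computation (the first row of $\sum a_kB_k+2i\pi\sum m_kPJ_kP^{-1}$ equals $2i\pi m_1e_1^{T}$, forcing $m_1=0$) is in fact a cleaner justification of the step the paper disposes of by merely asserting $mPJ_1P^{-1}\notin\Psi(MA(n,\mathbb{C}))$ for $m\neq 0$.
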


\begin{proof} Let  $G=\Phi(\mathcal{G})$.  Then  $G$ is
generated by  $\Phi(f_{1}),\dots,\Phi(f_{p})$.\ Apply Proposition
~\ref{p:10} to $G$, $A_{k}=\Phi(f_{k})$,
 $B_{k}=\Psi(f'_{k})\in\mathrm{g}^{1}$,  then  we have
$$\mathrm{g} =
\underset{k=1}{\overset{p}{\sum}}\mathbb{Z}\Psi(f'_{k}) +2i\pi
\mathbb{Z} \underset{k=1}{\overset{r}{\sum}} PJ_{k}P^{-1}.$$

We have
$\underset{k=1}{\overset{p}{\sum}}\mathbb{Z}\Psi(f'_{k})\subset\Psi(MA(n,
\mathbb{C}))$. Moreover, for every $k=2,\dots,r$, $J_{k}\in
\Psi(MA(n, \mathbb{C}))$, hence $PJ_{k}P^{-1}\in
\Psi(MA(n,\mathbb{C}))$, since $P\in \Phi(GA(n, \mathbb{C}))$.
 However,  $mPJ_{1}P^{-1}\notin \Psi(MA(n,\mathbb{C}))$ for every $m\in\mathbb{Z}\backslash\{0\}$, since $J_{1}$ has
  the form $J_{1}=\mathrm{diag}(1, J')$ where $J'\in M_{n}(\mathbb{C})$. As  $\mathrm{g}^{1}=\mathrm{g}\cap \Psi(MA(n, \mathbb{C}))$, then $mPJ_{1}P^{-1}\notin \mathrm{g}^{1}$ for every $m\in\mathbb{Z}\backslash\{0\}$.
 Hence we obtain:
$$\mathrm{g}^{1} =\left\{\begin{array}{c}
                       \underset{k=1}{\overset{p}{\sum}}\mathbb{N}\Psi(f'_{k}) +
\underset{k=2}{\overset{r}{\sum}} 2i\pi \mathbb{Z}PJ_{k}P^{-1}, \
if\ r\geq2 \ \ \  \\
                       \underset{k=1}{\overset{p}{\sum}}\mathbb{N}\Psi(f'_{k}), \ if\ r=1 \ \ \ \ \ \ \ \ \ \ \ \ \ \ \ \ \ \ \ \ \ \ \ \ \ \ \ \
                     \end{array}
\right.$$

Since $J_{k}u_{0}=e^{(k)}$, we get $$\mathrm{g}^{1}_{v_{0}}
=\left\{\begin{array}{c}
                       \underset{k=1}{\overset{p}{\sum}}\mathbb{N}\Psi(f'_{k})v_{0} +
\underset{k=2}{\overset{r}{\sum}} 2i\pi \mathbb{Z}Pe^{(k)}, \ if\
r\geq2 \ \ \ \ \\
                       \underset{k=1}{\overset{p}{\sum}}\mathbb{N}\Psi(f'_{k})v_{0}, \ if\ r=1 \ \ \ \ \ \ \ \ \ \ \ \ \ \ \ \ \ \ \ \ \ \ \ \ \ \ \ \
                     \end{array}
\right.$$

By Lemma~\ref{L:01234},(iii), one has
$\{0\}\times\mathfrak{q}_{w_{0}}=\mathrm{g}^{1}_{v_{0}}$ and
$\Psi(f'_{k})v_{0}=(0,\ f'_{k}(w_{0}))$, so
$\mathfrak{q}_{w_{0}}=p_{2}\left(\mathrm{g}^{1}_{v_{0}}\right)$.
It follows that $$\mathfrak{q}_{w_{0}}=\left\{\begin{array}{c}
                                     \underset{k=1}{\overset{p}{\sum}}\mathbb{N}f'_{k}(w_{0}) + \underset{k=2}{\overset{r}{\sum}} 2i\pi \mathbb{Z}p_{2}(Pe^{(k)}),\ if\ r\geq2 \\
                                     \underset{k=1}{\overset{p}{\sum}}\mathbb{N}f'_{k}(w_{0}) , \ if\ r=1\ \ \ \ \ \ \ \ \  \ \ \ \ \ \ \ \ \ \  \ \ \ \ \ \ \ \
                                   \end{array}
\right.$$ The proof is completed.
\end{proof}
\medskip

Recall the following proposition which was proven in \cite{mW}:
\\

\begin{prop}\label{p:12}$(cf.$ \cite{mW}, $page$  $35)$. Let $F = \mathbb{Z}u_{1}+\dots+\mathbb{Z}u_{m}$ with $u_{k} =
Re(u_{k})+i Im(u_{k})$, where $\mathrm{Re}(u_{k})$,
$\mathrm{Im}(u_{k})\in \mathbb{R}^{n}$, $k = 1,\dots, m$. Then $F$
is dense in $\mathbb{C}^{n}$ if and only if for every
$(s_{1},\dots,s_{m})\in
 \mathbb{Z}^{p}\backslash\{0\}$ :
$$\mathrm{rank}\left[\begin{array}{cccc }
 \mathrm{Re}(u_{1 }) &\dots &\dots & \mathrm{Re}(u_{m }) \\
   \mathrm{Im}(u_{1 }) &\dots &\dots & \mathrm{Im}(u_{m}) \\
  s_{1 } &\dots&\dots & s_{m }
 \end{array}\right] =\ 2n+1.$$
\end{prop}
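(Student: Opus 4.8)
The plan is to identify $\mathbb{C}^{n}$ with $\mathbb{R}^{N}$, $N=2n$, via the map $u\mapsto(\mathrm{Re}(u),\mathrm{Im}(u))$, which is a homeomorphism of topological groups, so density is preserved. Under it, $F$ becomes the finitely generated subgroup $\sum_{k=1}^{m}\mathbb{Z}v_{k}$ of $\mathbb{R}^{N}$, where $v_{k}=(\mathrm{Re}(u_{k}),\mathrm{Im}(u_{k}))$. I would write $M$ for the $N\times m$ real matrix whose $k$-th column is $v_{k}$, i.e. the top $2n$ rows of the displayed matrix. The strategy is to first reduce the density of $F$ to a dual statement about annihilating characters, and then perform a purely linear-algebraic translation into the stated rank condition.

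The main analytic input is the duality theory for closed subgroups of $\mathbb{R}^{N}$. Identifying the Pontryagin dual of $\mathbb{R}^{N}$ with $\mathbb{R}^{N}$ through the pairing $\langle\xi,x\rangle\mapsto e^{2i\pi\langle\xi,x\rangle}$, the annihilator of $F$ is $F^{\perp}=\{\xi\in\mathbb{R}^{N}:\ \langle\xi,v\rangle\in\mathbb{Z}\ \text{for all } v\in F\}$, and the double-annihilator theorem gives $\overline{F}=(F^{\perp})^{\perp}$. Hence $\overline{F}=\mathbb{R}^{N}$ if and only if $F^{\perp}=\{0\}$. Because $F$ is generated by the $v_{k}$, the membership $\xi\in F^{\perp}$ is equivalent to $M^{T}\xi\in\mathbb{Z}^{m}$. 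This yields the working criterion I would use: $F$ is dense in $\mathbb{C}^{n}$ if and only if the only $\xi\in\mathbb{R}^{N}$ with $M^{T}\xi\in\mathbb{Z}^{m}$ is $\xi=0$.

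It then remains to match this criterion with the rank condition, which I would do by negating both sides. The rank condition fails precisely when there is some $s\in\mathbb{Z}^{m}\setminus\{0\}$ for which the rows of $\bigl[\begin{smallmatrix}M\\ s^{T}\end{smallmatrix}\bigr]$ are $\mathbb{R}$-linearly dependent, that is, there exist $\lambda\in\mathbb{R}^{N}$ and $\mu\in\mathbb{R}$, not both zero, with $\lambda^{T}M+\mu s^{T}=0$. Splitting on $\mu$: the case $\mu=0$ forces $\mathrm{rank}(M)<N$, equivalently a nonzero $\xi$ with $M^{T}\xi=0\in\mathbb{Z}^{m}$; the case $\mu\neq0$ gives, after scaling $\xi=-\lambda/\mu$, a nonzero $\xi$ with $M^{T}\xi=s\in\mathbb{Z}^{m}\setminus\{0\}$. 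In either case one produces a nonzero $\xi$ with $M^{T}\xi\in\mathbb{Z}^{m}$. Conversely, from a nonzero $\xi$ with $M^{T}\xi=s\in\mathbb{Z}^{m}$ one recovers a dependency: if $s\neq0$ the choice $(\lambda,\mu)=(\xi,-1)$ makes $\bigl[\begin{smallmatrix}M\\ s^{T}\end{smallmatrix}\bigr]$ rank-deficient, while if $s=0$ then $\mathrm{rank}(M)<N$, so the augmented matrix drops rank for every nonzero integer bottom row. Thus the negation of the rank condition is equivalent to the negation of $F^{\perp}=\{0\}$, completing the equivalence.

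The genuinely nontrivial ingredient is the duality statement $\overline{F}=(F^{\perp})^{\perp}$; everything after it is bookkeeping with the two cases $\mu=0$ and $\mu\neq0$. The main obstacle is therefore setting up and justifying this topological-group input correctly. If one prefers to avoid invoking Pontryagin duality in full, the same step can be obtained from the multidimensional Kronecker approximation theorem, or from the structure theorem classifying closed subgroups of $\mathbb{R}^{N}$ as $V\oplus L$ with $V$ a linear subspace and $L$ a lattice in a complement, since any proper closed subgroup of $\mathbb{R}^{N}$ then admits a nonzero annihilating character, giving $F^{\perp}=\{0\}\Rightarrow\overline{F}=\mathbb{R}^{N}$. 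The linear-algebra translation must be carried out in both directions but is routine.
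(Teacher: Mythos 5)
Your proof is correct, but there is nothing in the paper to compare it against: the paper does not prove Proposition \ref{p:12} at all, it simply quotes it from Waldschmidt's lecture notes \cite{mW} ("Recall the following proposition which was proven in..."). What you have done is supply a self-contained proof of the cited result, and it is the standard one: identify $\mathbb{C}^{n}$ with $\mathbb{R}^{2n}$, reduce density of $F$ to $F^{\perp}=\{0\}$ via the double-annihilator (bipolar) theorem for subgroups of $\mathbb{R}^{N}$ --- or, as you note, via the structure theorem $V\oplus L$ for closed subgroups, which yields the nontrivial implication $F^{\perp}=\{0\}\Rightarrow\overline{F}=\mathbb{R}^{N}$ --- and then translate $F^{\perp}\neq\{0\}$ into rank-deficiency of the augmented matrix. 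The linear-algebra bookkeeping is carried out correctly in both directions: in the case $\mu\neq 0$ your $\xi=-\lambda/\mu$ is automatically nonzero because $M^{T}\xi=s\neq 0$, and in the converse direction the degenerate case $s=0$ correctly falls back on $\mathrm{rank}(M)<N$, which makes the augmented matrix rank-deficient for every nonzero integer bottom row, so the rank condition still fails. Two minor observations: the paper's statement contains a typo ($\mathbb{Z}^{p}$ should be $\mathbb{Z}^{m}$), which you silently repaired; and your criterion also makes transparent why the proposition does the work asked of it in the proof of Theorem \ref{T:00}, since full row rank $2n+1$ forces $m\geq 2n+1$, so a group with $m\leq 2n$ generators can never be dense.
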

\bigskip

\
\\
{\it Proof of Theorem ~\ref{T:00}.}  Suppose that $\mathcal{G}$ is
hypercyclic, so $\overline{\mathcal{G}(x)}=\mathbb{C}^{n}$ for
some $x\in\mathbb{C}^{n}$. By Lemma~\ref{LL1L:9},(iii),
$\overline{\widetilde{G}(1,x)}=\mathbb{C}^{n+1}$ and by Theorem
~\ref{T:5}, $\overline{\widetilde{G}(v_{0})}=\mathbb{C}^{n+1}$ and
so $\overline{\widetilde{\mathrm{g}}(v_{0})}=\mathbb{C}^{n+1}$.
Then
$p_{1}(\overline{\widetilde{\mathrm{g}}(v_{0})})=\mathbb{C}^{n}$,
where  where $p_{1}: \ \{0\}\times \mathbb{C}^{n}\oplus
   \mathbb{C}v_{0}\longrightarrow\{0\}\times \mathbb{C}^{n}$
   setting by $p_{1}(v+\lambda v_{0})=v$ for every $v\in \{0\}\times \mathbb{C}^{n}\oplus
   \mathbb{C}v_{0}$. By Lemma~\ref{Lkl:01},
   $p_{1}(\widetilde{\mathrm{g}}(v_{0}))= \mathfrak{q}_{w_{0}}$.
   Therefore
$\overline{\mathfrak{q}_{w_{0}}}= \mathbb{C}^{n}$. By
Proposition~\ref{p:11},
$$\mathfrak{q}_{w_{0}}=\left\{\begin{array}{c}
                                     \underset{k=1}{\overset{p}{\sum}}\mathbb{N}f'_{k}(w_{0}) + \underset{k=2}{\overset{r}{\sum}} 2i\pi \mathbb{Z}p_{2}(Pe^{(k)}),\ if\ r\geq2 \\
                                     \underset{k=1}{\overset{p}{\sum}}\mathbb{N}f'_{k}(w_{0}) , \ if\ r=1\ \ \ \ \ \ \ \ \  \ \ \ \ \ \ \ \ \ \  \ \ \ \ \ \ \ \
                                   \end{array}
\right.$$

Since $m:=p+r-1\leq 2n$ then by Lemma~\ref{p:12},
$\mathfrak{q}_{w_{0}}$ can not be dense in $\mathbb{C}^{n}$, a
contradiction. $\hfill{\Box}$

\bigskip

\section{{\bf Proof of Theorem~\ref{T:1}}}
Recall that $\mathcal{G}$ is an abelian sub-semigroup of $MA(n,
\mathbb{C})$, $G=\Phi(\mathcal{G})$ and $\widetilde{G}$ is the
semi-group generated by $G$ and $\mathbb{C}I_{n+1}=\{\lambda
I_{n+1}:\  \  \ \lambda\in \mathbb{C} \}$. Then $\widetilde{G}$ is
an abelian sub-semigroup of $M_{n+1}(\mathbb{C})$. Let $k\geq 1$
be an integer, denote by $(\mathbb{C}^{n+1})^{k}$ the $k$-fold
Cartesian product of $\mathbb{C}^{n+1}$. For every $u = (x_{1},
\dots, x_{k})\in (\mathbb{C}^{n+1})^{k}$, the orbit of $u$ under
the action of $G$ (resp. $\widetilde{G}$) on
$(\mathbb{C}^{n+1})^{k}$ is denoted $$O^{k}_{G}(u) =
\{(Ax_{1},\dots ,Ax_{k}) : A \in G\}.$$ $$(\mathrm{resp}.\ \
O^{k}_{\widetilde{G}}(u) = \{(Ax_{1},\dots ,Ax_{k}) : A \in
\widetilde{G}\}.)$$

\medskip

\begin{lem}\label{L:00120001} Under above notation. Let $k\geq 2$ and $u=(x_{1},\dots, x_{n})
\in (\mathbb{C}^{n})^{k}$. Denote by
$\widetilde{u}:=((1,x_{1}),\dots, (1,x_{n}))$ then the following
assertions are equivalent:\ \\ (i)
$\overline{O^{k}_{\mathcal{G}}(u)}=(\mathbb{C}^{n})^{k}$.\ \\ (ii)
$\overline{O^{k}_{G}(\widetilde{u})}=\left(\{1\}\times\mathbb{C}^{n}\right)^{k}$.
\
\\ (iii)
$\overline{O^{k}_{\widetilde{G}}(\widetilde{u})}=(\mathbb{C}^{n+1})^{k}$.
\end{lem}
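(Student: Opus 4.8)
The plan is to transport the scalar-normalization argument of Lemma~\ref{LL1L:9} coordinatewise to the $k$-fold product, in the same spirit in which that lemma treated a single orbit. Throughout I would rely on the elementary identity $\Phi(f)(1,x)=(1,f(x))$ for $f\in\mathcal{G}$ and $x\in\mathbb{C}^{n}$, which is immediate from the definition of $\Phi$.

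First I would settle $(i)\Longleftrightarrow(ii)$. Consider the map $\Theta:(\mathbb{C}^{n})^{k}\to(\{1\}\times\mathbb{C}^{n})^{k}$ given by $\Theta(y_{1},\dots,y_{k})=((1,y_{1}),\dots,(1,y_{k}))$. It is a homeomorphism onto $(\{1\}\times\mathbb{C}^{n})^{k}$, and by the identity above it carries $O^{k}_{\mathcal{G}}(u)$ bijectively onto $O^{k}_{G}(\widetilde{u})$ (apply $f$, resp. $\Phi(f)$, in each of the $k$ slots). Since $\Theta$ and $\Theta^{-1}$ are continuous and $\Theta((\mathbb{C}^{n})^{k})=(\{1\}\times\mathbb{C}^{n})^{k}$, density of $O^{k}_{\mathcal{G}}(u)$ in $(\mathbb{C}^{n})^{k}$ is equivalent to density of $O^{k}_{G}(\widetilde{u})$ in $(\{1\}\times\mathbb{C}^{n})^{k}$. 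This is the routine part.

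Next I would prove $(iii)\Longrightarrow(ii)$. Fix a target $((1,y_{1}),\dots,(1,y_{k}))$ and, using $(iii)$, choose a sequence $B_{m}\in\widetilde{G}$ with $B_{m}(1,x_{j})\to(1,y_{j})$ for every $j$. Writing $B_{m}=\lambda_{m}\Phi(f_{m})$ with $\lambda_{m}\in\mathbb{C}^{*}$ and $f_{m}\in\mathcal{G}$, the identity gives $B_{m}(1,x_{j})=(\lambda_{m},\lambda_{m}f_{m}(x_{j}))$, so the first-coordinate entries force $\lambda_{m}\to1$. Dividing, $\Phi(f_{m})(1,x_{j})=\frac{1}{\lambda_{m}}B_{m}(1,x_{j})\to(1,y_{j})$ for all $j$ simultaneously, whence $((1,y_{1}),\dots,(1,y_{k}))\in\overline{O^{k}_{G}(\widetilde{u})}$. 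Since $O^{k}_{G}(\widetilde{u})\subset(\{1\}\times\mathbb{C}^{n})^{k}$, which is closed, the reverse inclusion is automatic and $(ii)$ follows. This mirrors the single-orbit argument essentially verbatim.

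The hard part will be $(ii)\Longrightarrow(iii)$. The template from Lemma~\ref{LL1L:9} suggests writing $(\mathbb{C}^{n+1})^{k}$, up to a negligible set, as $\bigcup_{\lambda\in\mathbb{C}^{*}}\lambda\cdot(\{1\}\times\mathbb{C}^{n})^{k}$ and then invoking $\lambda\,O^{k}_{G}(\widetilde{u})\subset O^{k}_{\widetilde{G}}(\widetilde{u})$ together with $(ii)$. The obstacle is that for $k\geq2$ a single matrix $B=\lambda\Phi(f)\in\widetilde{G}$ multiplies all $k$ slots by the \emph{same} scalar $\lambda$, so every point of $O^{k}_{\widetilde{G}}(\widetilde{u})$ has the form $((\lambda,\ast),\dots,(\lambda,\ast))$ with coinciding first coordinates; consequently $O^{k}_{\widetilde{G}}(\widetilde{u})$, and hence its closure, lies in the proper closed subspace of $(\mathbb{C}^{n+1})^{k}$ on which all first coordinates agree, and the scalar-union above is visibly not dense. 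Thus the verbatim transfer of the single-orbit scheme breaks precisely here, which is exactly where the genuine content of the statement sits. I would therefore treat $(ii)\Longrightarrow(iii)$ as the crux: one must either inject an extra ingredient beyond the single-orbit scheme, or make explicit that, because of this shared-scalar degeneracy, $(iii)$ (and with it $(ii)$) simply cannot hold for $k\geq2$ — which is precisely the assertion of Theorem~\ref{T:1}.
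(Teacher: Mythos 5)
Your handling of $(i)\Longleftrightarrow(ii)$ and of $(iii)\Longrightarrow(ii)$ is correct and coincides with what the paper actually establishes: the paper proves $(i)\Longrightarrow(ii)$ by the same slot-by-slot use of the identity $\Phi(f)(1,x)=(1,f(x))$ (your homeomorphism $\Theta$ packages this more cleanly and yields the converse direction as well), and your scalar-normalization argument forcing $\lambda_{m}\to 1$ is essentially verbatim the paper's.

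The crux you flag is a genuine defect, and it is exactly the point at which the paper's own proof collapses. Read the paragraph the paper labels ``$(ii)\Longrightarrow(iii)$'': it begins by taking a sequence $(B_{m})_{m}$ in $\widetilde{G}$ with $B_{m}(1,x_{j})\to(1,y_{j})$ for all $j$ --- that is, it \emph{assumes} $\widetilde{v}\in\overline{O^{k}_{\widetilde{G}}(\widetilde{u})}$, a hypothesis of type $(iii)$ --- and it ends by concluding $\widetilde{v}\in\overline{O^{k}_{G}(\widetilde{u})}$, a conclusion of type $(ii)$. So the three paragraphs of the paper's proof establish $(i)\Rightarrow(ii)$, $(iii)\Rightarrow(ii)$ and $(iii)\Rightarrow(i)$; the implication $(ii)\Rightarrow(iii)$ (equivalently $(i)\Rightarrow(iii)$), which is precisely what the deduction of Theorem~\ref{T:1} from Proposition~\ref{P:P10} requires, is never proved. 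Your shared-scalar observation explains why it cannot be proved directly: every $B\in\widetilde{G}$ has the form $\lambda\Phi(f)$ or $\lambda I_{n+1}$, so every point of $O^{k}_{\widetilde{G}}(\widetilde{u})$ has all $k$ first coordinates equal to the same $\lambda$; hence the orbit closure lies in the proper closed subspace $\left\{((z_{1},w_{1}),\dots,(z_{k},w_{k})) : z_{1}=\dots=z_{k}\right\}$ of $(\mathbb{C}^{n+1})^{k}$, and $(iii)$ is identically false for $k\geq 2$ (in contrast with the case $k=1$ of Lemma~\ref{LL1L:9}, where the union $\bigcup_{\lambda\in\mathbb{C}^{*}}\lambda\left(\{1\}\times\mathbb{C}^{n}\right)$ really is dense). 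Consequently the lemma can hold only vacuously, i.e.\ only because $(i)$ and $(ii)$ also never hold --- which is the assertion of Theorem~\ref{T:1} itself --- so deriving Theorem~\ref{T:1} from this lemma is circular; note moreover that $\widetilde{G}$ is not finitely generated, so Proposition~\ref{P:P10} would not apply to it even if $(iii)$ were available. Your proposal is therefore incomplete as a proof of the stated equivalence, but the incompleteness is forced by the statement itself: the sound salvage is to retain $(i)\Longleftrightarrow(ii)$, discard $(iii)$, and prove Theorem~\ref{T:1} by another route.
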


\medskip

\begin{proof}$(i)\Longrightarrow (ii)$:  Let $v=(y_{1},\dots,
y_{k})\in(\mathbb{C}^{n})^{k}$, then $v\in
\overline{O^{k}_{\mathcal{G}}(u)}$, so there exists a sequence
$(g_{m})_{m}$ in $\mathcal{G}$ such that $\underset{m\to
+\infty}{lim}(g_{m}(x_{1}),\dots,g_{m}(x_{k}))=(y_{1},\dots,
y_{k})$. Then $\underset{m\to
+\infty}{lim}(\Phi(g_{m})(1,x_{1}),\dots,\Phi(g_{m})(1,x_{k}))=((1,y_{1}),\dots,
(1,y_{k}))$. Therefore $\widetilde{v}:=((1,y_{1}),\dots,
(1,y_{k}))\in \overline{O^{k}_{G}(\widetilde{u})}$, since
$\Phi(f_{m})\in G$ for every $m$.\
\\ $(ii)\Longrightarrow (iii)$: Let $v=(y_{1},\dots,
y_{k})\in(\mathbb{C}^{n})^{k}$ and $(B_{m})_{m}$ be a sequence in
$\widetilde{G}$ such that $\underset{m\to
+\infty}{lim}(B_{m}(1,x_{1}),\dots,
B_{m}(1,x_{k}))=\widetilde{v}$. One can write
$B_{m}=\lambda_{m}\Phi(f_{m})$, with $f_{m}\in \mathcal{G}$ and
$\lambda_{m}\in\mathbb{C}^{*}$, thus
$B_{m}(1,x_{j})=(\lambda_{m},\ \ \lambda_{m}f_{m}(x_{j}))$ for
every $j=1,\dots, k$, so $\underset{m\to
+\infty}{lim}\lambda_{m}=1$ because  $\underset{m\to
+\infty}{lim}B_{m}(1, x_{1})=(1,y_{1})$. Therefore,
$\underset{m\to +\infty}{lim}\Phi(f_{m})(1,x_{j})=\underset{m\to
+\infty}{lim}\frac{1}{\lambda_{m}}B_{m}(1,x_{j})=(1,y_{j})$, hence
\begin{align*}
\widetilde{v}& =
\underset{m\to+\infty}{lim}\left(\frac{1}{\lambda_{m}}B_{m}(1,x_{1}),\dots,\frac{1}{\lambda_{m}}B_{m}(1,x_{k})\right)\\
\ &=
\underset{m\to+\infty}{lim}\left(\Phi(f_{m})(1,x_{1}),\dots,\Phi(f_{m})(1,x_{k})\right)
\end{align*}
 It follows that $\widetilde{v}\in
\overline{O^{k}_{G}(\widetilde{u})}$.\ \\
$(iii)\Longrightarrow(i):$  Let $v=(y_{1},\dots,
y_{k})\in(\mathbb{C}^{n})^{k}$, then $\widetilde{v}\in
\overline{O^{k}_{\widetilde{G}}(\widetilde{u})}$, so  there exists
a sequence $(B_{m})_{m}$ in $\widetilde{G}$ such that
$\underset{m\to
+\infty}{lim}(B_{m}(1,x_{1}),\dots,B_{m}(1,x_{k}))=((1,y_{1}),\dots,
(1,y_{k}))$.  One can write $B_{m}=\lambda_{m}\Phi(f_{m})$, with
$f_{m}\in \mathcal{G}$ and $\lambda_{m}\in\mathbb{C}^{*}$, thus
$B_{m}(1,x_{j})=(\lambda_{m},\ \ \lambda_{m}f_{m}(x_{j}))$ for
every $j=1,\dots, k$, so
$$\underset{m\to+\infty}{lim}\lambda_{m}=1.$$ Therefore,
$\underset{m\to +\infty}{lim}\Phi(f_{m})(1,x_{j})=\underset{m\to
+\infty}{lim}\frac{1}{\lambda_{m}}B_{m}(1,x_{j})=(1,y_{j})$.
Therefore $v\in \overline{O^{k}_{\mathcal{G}}(u)}.$\
\end{proof}
\medskip

\begin{prop}\label{P:P10}$($\cite{AA}, Theorem 1.2$)$ Let $L$ be an abelian semigroup generated by $p$ matrices
($p\geq 1$) on $\mathbb{K}^{n}$ ($\mathbb{K} = \mathbb{R}$ or
$\mathbb{C}$). Then the action of $L$ on $\mathbb{K}^{n}$ is never
k-transitive for $k\geq  2$.
\end{prop}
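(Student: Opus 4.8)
The plan is to rule out density of \emph{every} $k$-orbit by exhibiting a nonzero linear functional on $(\mathbb{K}^{n})^{k}$ whose kernel contains the whole orbit; then the orbit closure sits inside a proper closed subspace and cannot fill $(\mathbb{K}^{n})^{k}$. The mechanism is that the $k$-fold action applies the \emph{same} matrix $A$ to all $k$ coordinates, so any common eigendirection of $L$ produces a conserved quantity.

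\textbf{Step 1: a common eigencovector.} Because $L$ is abelian, the transposed family $\{A^{T}:A\in L\}$ is again commuting, and a commuting family of complex matrices has a common eigenvector. Hence there is a nonzero linear functional $\phi$ on $\mathbb{C}^{n}$ and scalars $\chi(A)$ with $\phi(Ax)=\chi(A)\phi(x)$ for all $A\in L$ and all $x$. When $\mathbb{K}=\mathbb{R}$ I would first complexify the action to $\mathbb{C}^{n}$, build $\phi$ there, and then use that a nonzero $\mathbb{C}$-linear functional cannot vanish on all of $\mathbb{R}^{n}$ (it would otherwise vanish on the $\mathbb{C}$-span $\mathbb{C}^{n}$), so the restriction of $\phi$ to $\mathbb{R}^{n}$ remains nonzero.

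\textbf{Step 2: the conserved relation.} Fix any $u=(x_{1},\dots,x_{k})\in(\mathbb{K}^{n})^{k}$ with $k\geq 2$. For every $A\in L$ and all indices $i,j$,
$$\phi(Ax_{i})\,\phi(x_{j})=\chi(A)\,\phi(x_{i})\phi(x_{j})=\phi(Ax_{j})\,\phi(x_{i}),$$
so every point $(y_{1},\dots,y_{k})\in O^{k}_{L}(u)$ satisfies $\phi(x_{j})\phi(y_{i})-\phi(x_{i})\phi(y_{j})=0$ for all $i,j$. If $\phi(x_{i})=0$ for every $i$, the whole orbit lies in the proper closed subspace $(\ker\phi)^{k}$. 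Otherwise choose $i_{0}$ with $\phi(x_{i_{0}})\neq 0$ and any $j\neq i_{0}$, and set
$$F(y_{1},\dots,y_{k})=\phi(x_{j})\phi(y_{i_{0}})-\phi(x_{i_{0}})\phi(y_{j}).$$
This is a nonzero linear functional on $(\mathbb{K}^{n})^{k}$, since its term $-\phi(x_{i_{0}})\phi(y_{j})$ alone is nonzero, and by the displayed identity the orbit is contained in $\ker F$. In either case $\overline{O^{k}_{L}(u)}$ lies in a proper closed subset of $(\mathbb{K}^{n})^{k}$, so it is strictly smaller than $(\mathbb{K}^{n})^{k}$; as $u$ was arbitrary, $L$ is not $k$-transitive for any $k\geq 2$.

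The only genuinely delicate point, and the step I would write most carefully, is Step 1 over $\mathbb{R}$: guaranteeing a common eigencovector whose induced functional stays nonzero on the real space, which I handle by complexification as above. The abelian hypothesis is used precisely here (to commute the transposes and obtain a joint eigenvector), while the diagonal nature of the $k$-fold action is exactly what promotes this single eigendirection into the orbit invariant $F$; both hypotheses are essential. I note that one could instead argue via the normal form of the diagonal semigroup $\{\operatorname{diag}(A,\dots,A):A\in L\}$ on $\mathbb{C}^{nk}$ and the rank criterion (Proposition~\ref{p:12}), but the eigencovector argument is shorter and applies uniformly to both $\mathbb{R}$ and $\mathbb{C}$.
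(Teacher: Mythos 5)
Your proof is correct, and it takes a genuinely different route from the paper: the paper never proves Proposition~\ref{P:P10} at all, but imports it as a black box from \cite{AA}, where the result is obtained with the same structure-theoretic machinery the present paper uses for Theorem~\ref{T:00} --- conjugating the diagonal semigroup $\{\mathrm{diag}(A,\dots,A):A\in L\}$ acting on $\mathbb{K}^{nk}$ into the normal form $\mathcal{K}_{\eta,r}(\mathbb{C})$, passing through the exponential to an additive group of the type $\mathrm{g}_{v_{0}}$, and applying a Waldschmidt-type rank/density criterion (Proposition~\ref{p:12}). You replace all of that with two observations: an abelian family of matrices has a common eigencovector $\phi$ (a common eigenvector of the transposes, built over $\mathbb{C}$ and, when $\mathbb{K}=\mathbb{R}$, shown to restrict nontrivially to $\mathbb{R}^{n}$), and since the $k$-fold action applies a single $A$ to all coordinates, the quantity $\phi(x_{j})\phi(y_{i})-\phi(x_{i})\phi(y_{j})$ vanishes identically along the orbit; this traps every $k$-orbit ($k\geq 2$) in the kernel of an explicit nonzero functional, hence in a proper closed subspace. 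I checked the details: the identity $\phi(x_{j})\phi(Ax_{i_{0}})-\phi(x_{i_{0}})\phi(Ax_{j})=\chi(A)\left(\phi(x_{j})\phi(x_{i_{0}})-\phi(x_{i_{0}})\phi(x_{j})\right)=0$ is right, $F$ is nonzero because its two terms act on distinct coordinates $y_{i_{0}}$ and $y_{j}$, and the degenerate case $\phi(x_{i})=0$ for all $i$ is handled separately. The only cosmetic point: over $\mathbb{R}$ your $F$ is $\mathbb{C}$-valued, so to speak of a hyperplane you should pass to $\mathrm{Re}\,F$ or $\mathrm{Im}\,F$, whichever is nonzero on $(\mathbb{R}^{n})^{k}$; this changes nothing. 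What your route buys: it is short, self-contained, uniform in $\mathbb{R}$ and $\mathbb{C}$, it never uses finite generation (so it rules out $k$-transitivity for arbitrary abelian matrix semigroups), and it even shows every $k$-orbit is nowhere dense. What the paper's route buys: the normal-form computation behind \cite{AA} produces the quantitative data (number of generators of $\mathrm{g}_{v_{0}}$ against the rank $2nk+1$ required for density) that the rest of this paper reuses elsewhere, e.g.\ in the proof of Theorem~\ref{T:00}; your softer invariant argument, while cleaner for this proposition, yields no such auxiliary structure.
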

\medskip

\begin{proof}[Proof of Theorems ~\ref{T:1}]  The proof results directly from Lemma ~\ref{L:00120001} and
Proposition~\ref{P:P10}.
\end{proof}
\bigskip

\bibliographystyle{amsplain}
\vskip 0,4 cm

\end{document}